\definecolor{mybluegreen}{rgb}{0.1, 0.55, 0.35}
\newtheorem{theorem}{Theorem}[section]
\newtheorem{lemma}[theorem]{Lemma}
\newtheorem{cor}[theorem]{Corollary}
\newtheorem{conj}{Conjecture}[section]
\newtheorem{corollary}[theorem]{Corollary}
\theoremstyle{definition}
\DeclareMathOperator{\one}{\bf 1}
\newcommand{\Sym}{\mathrm{Sym}} 
\newcommand{\sym}[1]{\Sym({#1})}
\newcommand{\Alt}{\mathrm{Alt}} 
\newcommand{\alt}[1]{\Alt({#1})}
\DeclareMathOperator\Ind{ind}
\DeclareMathOperator\Res{res}
\newcommand\indg[2]{{\Ind{ ({#1}) }^{#2}  }}
\newcommand\resg[2]{{\Res{ ({#1}) }_{#2} }}
\definecolor{mybluegreen}{rgb}{0.1, 0.55, 0.35}
\newcolumntype{?}{!{\vrule width 2pt}}
\newcolumntype{M}[1]{>{\centering\arraybackslash}m{#1}}
\newcolumntype{P}[1]{>{\centering\arraybackslash}p{#1}}
\newcolumntype{N}{@{}m{0pt}@{}}
\newcommand{\cardinality}[1]{|{#1}|}
\title[EKR-theorem for uniform set partitions]{An Extension of the Erd\H{o}s-Ko-Rado Theorem to uniform set partitions}
\author[K.~Meagher]{Karen Meagher${^2}$ ${^*}$} 
\email[K.~Meagher]{karen.meagher@uregina.ca}
\address[K.~Meagher]{Department of Mathematics and Statistics, University of Regina, Regina, SK, S4S 0A2, Canada}
\author[M.~N.~Shirazi]{Mahsa N. Shirazi}
 \email[M.~N.~Shirazi]{mahsa.nasrollahi@gmail.com}
\address[M.~N.~Shirazi]{Department of Mathematics and Statistics, University of Regina, Regina, SK, S4S 0A2, Canada}
\author[B.~Stevens]{Brett Stevens${^1}$} \email[B.~Stevens]{brett@math.carleton.ca}
\address[B.~Stevens]{School of Mathematics and Statistics, Carleton University, Ottawa, ON, K1S 5B6, Canada}
\date{\today}
\keywords{Erd\H{o}s-Ko-Rado Theorem, Uniform set partitions, Ratio bound, Clique, Coclique, Quotient graphs}
\subjclass[2010]{05E30, 05C50, 05C25}
\begin{document}

\begin{abstract}  
A $(k,\ell)$-partition is a set partition which has $\ell$ blocks each of size $k$. Two uniform set partitions $P$ and $Q$ are said to be partially $t$-intersecting if there exist blocks $P_{i}$ in $P$ and $Q_{j}$ in $Q$ such that $\left| P_{i} \cap Q_{j} \right|\geq t$. In this paper we prove a version of the Erd\H{o}s-Ko-Rado theorem for partially $2$-intersecting $(k,\ell)$-partitions. In particular, we show for $\ell$ sufficiently large, the set of all $(k,\ell)$-partitions in which a block contains a fixed pair is the largest set of 2-partially intersecting $(k,\ell)$-partitions. For for $k=3$, we show this result holds for all $\ell$.
\end{abstract}

\maketitle


\section{Introduction}
\label{sec:Intro}
In 1961, Erd\H{o}s, Ko, and Rado proved that if $\mathcal{F}$ is a $t$-intersecting family of $k$-subsets of $\{1,2,\ldots, n\}$, then $\binom{n-t}{k-t}$ is a tight upper bound on the size of $\mathcal{F}$, provided that $n$ is sufficiently large~\cite{MR0140419}. This result has motivated consideration of ``intersecting'' families of many other combinatorial objects using diverse proof techniques and has developed into an active and broad area of research. There are many recent results giving analogs of the EKR theorem; see, for example,~\cite{2int, MR3646689, MR3679841, MR2156694, MR0771733} or~\cite{MR3497070} and the references within. In this work, we prove an extension of the EKR theorem to systems of uniform set partitions.

A \textsl{$(k,\ell)$-partition} is a set partition of $\{ 1,2,\dots, k\ell \}$ with exactly $\ell$ blocks each of size $k$.
These are also called \textsl{uniform set partitions}. We use $\mathcal{U}_{k, \ell}$ to denote the set of all $(k,\ell)$-partitions, and $u_{k,\ell} = \left|  \mathcal{U}_{k, \ell} \right|$. It is easy to see that
\begin{equation}\label{eq:ukl}
u_{k,\ell} = \frac{1}{\ell!}\binom{k\ell}{k}\binom{k\ell-k}{k} \binom{k\ell-2k}{k} \cdots \binom{k}{k}.
\end{equation}

In~\cite{MR1752953}, Erd\H{o}s and Sz\'{e}kely considered different types of intersection for partitions. One of these types, and the one we consider here, two partitions $P$ and $Q$ are \textsl{intersecting in a pair} if there exist blocks $P_{i}$ in $P$, and $Q_{j}$ in $Q$ such that $\left| P_{i} \cap Q_{j} \right|\geq 2$.  Their work considers all partitions, not just uniform partitions.
In~\cite{MR2156694}, Meagher and Moura generalized this definition: two partitions $P$ and $Q$ are \textsl{partially $t$-intersecting}
if there exist $P_{i}$ in $P$, and $Q_{j}$ in $Q$ such that $\left| P_{i} \cap Q_{j} \right|\geq t$. 
This work is different than that of Erd\H{o}s and Sz\'{e}kely since only uniform partitions are considered in~\cite{MR2156694}.

A set of partitions is a \textsl{partially $t$-intersecting set} if any two partitions in the set are partially $t$-intersecting.
Meagher and Moura~\cite{MR2156694} conjectured that for $t\leq k$, if $\mathcal{P} \subset \mathcal{U}_{k, \ell}$ is a set of
partially $t$-intersecting partitions, then $\left| \mathcal{P} \right| \leq \binom{k\ell-t}{k-t}u_{k,\ell-1}.$  A set of this size can be formed by fixing a $t$-subset $T$ and the taking all $(k,\ell)$-partitions that have a block containing $T$; 
such a set is called a set of \textsl{canonically $t$-intersecting $(k,\ell)$-partitions}. Moreover, Meagher and Moura conjectured that only the canonically $t$-intersecting $(k,\ell)$-partitions have this maximum size.
As pointed out by Brunk in~\cite{brunkThesis}, this conjecture additionally requires that $k \leq  \ell(t-1)$, since if $k > \ell(t-1)$, then any two $(k,\ell)$-partitions are $t$-partially intersecting. 

If $k=t=2$, the the $(2,\ell)$-partitions are perfect matchings in the complete graph on $2\ell$ vertices. In this case, partially 2-intersecting is equivalent to intersecting (as sets). The Meagher-Moura conjecture has been proven in this case in~\cite{MR3646689}. In this paper we only consider $k\geq 3$.

In this work we prove the Meagher-Moura conjecture for $t=2$ with $k=3$ and all values of $\ell$, and for all $k\geq 4$, provided that $\ell$ is sufficiently large. Our approach is to define a graph in which the cocliques (also known as independent sets) are equivalent to partially $2$-intersecting $(k,\ell)$-partitions from $\mathcal{U}_{k,\ell}$. Then we use algebraic methods to find the size of a maximum coclique in the graph.

\section{Overview of Method}

Let $X$ be a graph. A \textsl{clique} in $X$ is a set of vertices for which their induced subgraph in $X$ is complete; and a 
\textsl{coclique} is a set of vertices in $X$ for which their induced subgraph is empty. The size of a largest clique and a 
largest coclique are denoted by $\omega(X)$ and $\alpha(X)$, respectively. The \textsl{adjacency matrix} $A(X)$ of $X$ is a 
matrix in which rows and columns are indexed by the vertices in $X$ and the $(i ,j)$-entry is 1 if $i$ and $j$ are adjacent, 
and 0 otherwise.  The \textsl{eigenvalues} of $X$ refer to the eigenvalues of its adjacency matrix.
We use $\mathbf{1}$ to denote the all-ones vector; for any $d$-regular graph, the all-ones vector is an eigenvector with eigenvalue $d$.
 
In general, finding the largest coclique of a graph $X$ is known to be NP-hard, 
but the \textsl{Delsarte-Hoffman (ratio)} bound gives an upper bound on $\alpha(X)$. This bound is based on the ratio 
between the largest and the smallest eigenvalue of the adjacency matrix of the graph. A proof of this result can be found in~\cite[Section 2.4]{MR3497070}.

\begin{theorem}[Delsarte-Hoffman bound]~\cite{MR0384310}\label{Thm:RatioBound}
Let $A$ be the adjacency matrix for a $d$-regular graph $X$ on vertex set $V(X)$. If the least eigenvalue of $A$ is $\tau$, then 
\begin{equation*}\label{eq:RatioBound}
\alpha(X)\leq \frac{|V(X)|}{1-\frac{d}{\tau}}.
\end{equation*}
If equality holds for some coclique $S$ with characteristic vector $\nu_{S}$, then 
\begin{equation*}
\nu_{S}-\frac{|S|}{|V(X)|}\mathbf{1}
\end{equation*}
is an eigenvector with eigenvalue $\tau$.
\end{theorem}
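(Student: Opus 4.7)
The plan is to prove the bound by the standard quadratic form argument on the characteristic vector of a coclique, combined with the spectral decomposition of the adjacency matrix with respect to the all-ones vector.

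First, I would set $n = |V(X)|$, let $S$ be a coclique, and write $\nu_S$ for its characteristic vector. Since $X$ is $d$-regular, $\mathbf{1}$ is an eigenvector of $A$ with eigenvalue $d$, so I can decompose
\begin{equation*}
\nu_S = \frac{|S|}{n}\mathbf{1} + v,
\end{equation*}
where $v \perp \mathbf{1}$ (the scalar is forced by taking inner product with $\mathbf{1}$). A direct computation gives $\|v\|^2 = |S|(n-|S|)/n$.

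Next, I would exploit the coclique condition. Since no two vertices of $S$ are adjacent, $\nu_S^{T} A \nu_S = 0$. Expanding with the decomposition above, the cross terms vanish because $A\mathbf{1} = d\mathbf{1}$ and $v \perp \mathbf{1}$, leaving
\begin{equation*}
0 = \nu_S^{T} A \nu_S = \frac{|S|^2}{n^2}\,\mathbf{1}^{T}A\mathbf{1} + v^{T}Av = \frac{|S|^2 d}{n} + v^{T}Av.
\end{equation*}
Because $\tau$ is the least eigenvalue of the symmetric matrix $A$, the Rayleigh quotient inequality yields $v^{T}Av \geq \tau \|v\|^2 = \tau \, |S|(n-|S|)/n$. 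Substituting, dividing by $|S|/n$, and rearranging (using $\tau < 0 < d - \tau$) produces exactly $|S| \leq n/(1 - d/\tau)$, which is the desired ratio bound.

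Finally, for the equality statement I would trace back through the inequality. Equality in $|S| = n/(1 - d/\tau)$ forces equality in $v^{T}Av \geq \tau \|v\|^2$. By the spectral theorem, this equality holds for a nonzero $v$ precisely when $v$ lies in the $\tau$-eigenspace of $A$. Thus $v = \nu_S - (|S|/n)\mathbf{1}$ is an eigenvector of $A$ with eigenvalue $\tau$, as claimed. The only mildly delicate point is keeping track of signs and confirming that $\tau < 0$ whenever $X$ has at least one edge, which ensures the denominator $1 - d/\tau$ is positive and the manipulation is valid; the rest is a clean linear-algebra computation with no real obstacle.
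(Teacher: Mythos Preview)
Your argument is correct and is exactly the standard spectral proof of the ratio bound. Note, however, that the paper does not actually prove this theorem: it merely states the result and cites \cite{MR0384310} and \cite[Section~2.4]{MR3497070} for a proof, so there is no in-paper argument to compare against. The proof you have written is essentially the one found in those references.
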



Define $X_{k,\ell}$ to be the graph with $\mathcal{U}_{k, \ell}$ as its vertex set, in which two partitions 
$P$ and $Q$ are adjacent if every pair of blocks, one from $P$ and one from $Q$, have at most $1$ element in common. 
The group $\sym{k\ell}$ acts transitively on the vertices of $X_{k, \ell}$ and preserves the edges. This means the $X_{k,\ell}$
 is vertex transitive and regular. We will denote the degree by $d_{k,\ell}$, or simply $d$ when the context is clear.

A resolvable packing design on $k\ell$ points with blocksize $k$ and index $\lambda=1$ is equivalent to a clique in this graph. Further, a resolvable balanced incomplete block design on $k\ell$ points with blocksize $k$ and index $\lambda=1$, if it exists, gives a maximum clique.  

For any distinct $i, j \in \{1,\dots, k\ell\}$, let $S_{i, j}$ be the subset of partitions in $\mathcal{U}_{k, \ell}$ for which the elements $i$ and $j$ are in the same block. Then $S_{i, j}$ is a coclique in the graph $X_{k, \ell}$ and the size of $S_{i,j}$ is 
\begin{equation*}
 \frac{1}{(\ell-1)!}\binom{k\ell-2}{k-2}\binom{k\ell-k}{k}\cdots \binom{k}{k}.
\end{equation*}
The main goal in this paper is to prove, using the ratio bound, that $S_{i, j}$ is a maximum coclique in $X_{k, \ell}$. 
For the ratio bound to hold with equality, we need to prove if $\tau$ is the least eigenvalue of $X_{k,\ell}$, then
\begin{equation*}
1-\frac{d_{k,\ell} }{\tau} =  \frac{u_{k,\ell}}{|S_{i,j}|} = \frac{k\ell-1}{k-1}.
\end{equation*}
Thus we need to prove two facts: first that $ \tau = - \frac{ d_{k,\ell} (k-1) }{ k(\ell- 1) }$ is an eigenvalue of $X_{k,\ell}$; and second that $\tau$ is the least eigenvalue of $X_{k,\ell}$.

In the next section, we show how the eigenvalues of $X_{k,\ell}$ are connected to the representations of $\sym{k\ell}$, and we prove some bounds on the degrees of the irreducible representations of $\sym{k\ell}$. Next, in Section~\ref{sec:evals}, we calculate three of the eigenvalues of $X_{k,\ell}$; one of these eigenvalues is the $\tau$ above. To prove that $\tau$ is the least eigenvalue, in Section~\ref{sec:multiplicity}, we show if there is another eigenvalue, strictly smaller than $\tau$, then its multiplicity must be bounded by a function that includes the ratio $u_{k,\ell}/d_{k,\ell}$.
In Section~\ref{sec:degree}, we show that the limit of ratio $u_{k,\ell}/d_{k,\ell}$ is finite as $\ell \rightarrow \infty$.  This gives a simple upper bound on $u_{k,\ell}/d_{k,\ell}$ for all sufficiently large $\ell$.  In Section~\ref{sec:multiplicity} we use the bounds from Section~\ref{sec:representations} so show that no such eigenvalues exist. This proves the Meagher-Moura Conjecture with $t=2$, for all values of $k$, provided that $\ell$ is sufficiently large. Finally, in Section~\ref{sec:exact3}, we find a prove a weaker bound for $u/d$ when $k=3$ but one that holds for all $\ell$.  Thus we prove the Meagher-Moura Conjecture for $t=2$, $k=3$ for all values of $\ell$.


\section{Representations of the Symmetric Group}\label{sec:representations}

In this section we will explain the connection between the eigenvalues of the graph $X_{k,\ell}$ and the 
irreducible representations of the symmetric group. We also give results on the dimensions of the 
irreducible representations that are involved in the eigenvalues.

For any character $\chi$ of $\sym{n}$, we can consider its restriction to $H \leq \sym{n}$ which is denoted by $\resg{\chi}{H}$. Similarly if $\chi$ is a representation of $H  \leq \sym{n}$, then its induced representation on $\sym{n}$ is denoted by $\indg{\chi}{\sym{n}}$. The trivial character on a group $H$ is denoted by $1_H$.

The stabilizer of a partition in $\mathcal{U}_{k, \ell}$ is the group $\sym{k} \wr \sym{\ell}$ (this is called the \textit{wreath product} of $\sym{k}$ and $\sym{\ell}$). The cosets $\sym{k\ell}/(\sym{k} \wr \sym{\ell})$ are in one-to-one correspondence with the partitions of $\mathcal{U}_{k, \ell}$.  The action of $\sym{k\ell}$ on the partitions is equivalent to the action of $\sym{k\ell}$ on the cosets $\sym{k\ell}/(\sym{k} \wr \sym{\ell})$ and this action is clearly transitive. The permutation representation of this action is
\[
\indg{1_{\sym{k} \wr \sym{\ell}}}{ \sym{k\ell} }.
\] 
The module for this representation can be thought of as the vector space 
of length-$u_{k,\ell}$ vectors with the characteristic vectors of  $P \in \mathcal{U}_{k,\ell}$, denoted by $v_P$, as its basis. 
The group $\sym{k\ell}$ acts on this vector space by the action on the partitions, for any $\sigma \in \sym{k\ell}$ the action is $\sigma(v_P) = v_{P^\sigma}$.

This representation can be decomposed as the sum of irreducible representations of $\sym{k\ell}$. If the multiplicity of each irreducible representation in the decomposition is equal to 1, then the representation is called \textsl{multiplicity-free}. In general, the group $\sym{k} \wr \sym{\ell}$ is not multiplicity free in $\sym{k\ell}$. In fact it is not multiplicity free unless $k=2$, $\ell =2$, or $(k,\ell)$ is one of (3,3), (4,3), (5,3) or (3,4)~\cite{MR2581098}.  

\subsection{Orbital Association Scheme}

The \textsl{orbitals} of the action of a group $G$ on a set $\Omega$ is the set of orbits of the action of $G$ on 
$\Omega \times \Omega$; so the orbitals are the orbits of the action of $G$ on the pairs from $\Omega$. 
Each orbital of $\sym{k\ell}$ on $\sym{k\ell}/(\sym{k} \wr \sym{\ell})$ can be represented by an object called a \textsl{meet table}. The meet table for two $(k,\ell)$-partitions is a $\ell \times \ell$ array in which the $(i, j)$-entry is $|P_i \cap Q_j |$. Two meet tables are \textsl{isomorphic} if one can be obtained from the other by permuting the rows and the columns. In~\cite[Section 15.4]{MR3497070} it is shown that the set of non-isomorphic meet tables correspond to the set of orbitals. For each orbital $\mathcal{O}$ there is a corresponding meet table $M$; this means for $P,Q \in \mathcal{U}_{k, \ell}$ the meet table of $P$ and $Q$ is $M$ if and only if $(P,Q) \in \mathcal{O}$. 
Further, each orbital can be represented as a $u_{k,\ell} \times u_{k,\ell}$ matrix, with the $(P,Q)$-entry equal to 1 if and only if the meet table of $P$ and $Q$ is isomorphic to the table representing the orbital. 
The set of these $u_{k,\ell} \times u_{k,\ell}$-matrices of the orbitals forms an \textsl{association scheme} if and only if $\indg{1_{\sym{k} \wr \sym{\ell}}}{ \sym{k\ell} }$ is multiplicity-free. In general, these matrices form a \textsl{homogeneous coherent configuration}.
 
The graph $X_{k,\ell}$ is the union of the orbitals from the action of $\sym{k\ell}$ on $\sym{k\ell}/(\sym{k} \wr \sym{\ell})$ that are represented by a meet table that has no entry greater than 1. This means that $X_{k,\ell}$ is in the commutant of the permutation representation of $\sym{k\ell}$. In particular, for any $\sigma \in \sym{k\ell}$, with permutation representation $M_\sigma$ we have that
\[
M_{\sigma^{-1}} A(X_{k,\ell}) M_{\sigma} =  A(X_{k,\ell}). 
\]
Further, if $v$ is any $\theta$-eigenvector of $X_{k,\ell}$, then $M_{\sigma}v$ is also a $\theta$-eigenvector.
This implies the eigenspaces of $X_{k,\ell}$ are invariant under the action of $\sym{k\ell}$ and thus a 
union of irreducible modules in the decomposition of  
\[
\indg{1_{\sym{k} \wr \sym{\ell}}}{ \sym{k\ell} }.
\] 
We say that an eigenvalue $\theta$ \textsl{belongs} to a module if the module is a subspace of the $\theta$-eigenspace.

\subsection{Dimensions of the representations of $\sym{k\ell}$}

In this section we will give some results on the irreducible representations of $\sym{n}$. We refer the reader to~\cite{MR1824028}, or any similar reference on this topic, for details and background.
It is well-known that the irreducible representations of $\sym{n}$ correspond to integer partitions on 
$n$.  We will use $\lambda \vdash n$ to 
indicate that $\lambda$ is an integer partition of $n$, this means that $\lambda = [\lambda_1,\lambda_2,\dots, \lambda_j]$, 
each $\lambda_i$ is an integer and $\sum_{i=1}^j \lambda _i = n$.
We will use $\chi_\lambda$ to represent the irreducible character of $\sym{n}$ corresponding to the partition 
$\lambda$. 

From \cite{MR3646689} we have a list of irreducible representations of the symmetric group with small degree

\begin{lemma}\label{lem:eightspecial}
  For $n \geq 9$, let $\chi$ be a representation of $\sym{n}$ with
  degree less than $(n^2-n)/2$. If $\chi_\lambda$ is a constituent of
  $\chi$, then $\lambda$ is one of the following partitions of $n$:
\[
[n],\, [1^n], \, [n-1,1],\, [2,1^{n-2}],\, [n-2,2], \, [2,2,1^{n-4}],\, [n-2,1,1], \, [3,1^{n-3}].
\]
\end{lemma}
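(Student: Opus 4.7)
The plan is to classify all irreducible characters of $\sym{n}$ of degree strictly less than $(n^2-n)/2$, since every irreducible constituent $\chi_\lambda$ of $\chi$ satisfies $\chi_\lambda(1) \leq \chi(1)$. Thus the lemma reduces to showing that for $n \geq 9$, the partitions $\lambda \vdash n$ with $\chi_\lambda(1) < \binom{n}{2}$ are exactly the eight listed.

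First, I would use the symmetry $\chi_{\lambda'}(1) = \chi_\lambda(1)$ coming from tensoring with the sign character, where $\lambda'$ denotes the conjugate of $\lambda$. The eight listed partitions fall into four conjugate pairs, so it suffices to consider partitions with $\lambda_1 \geq \lambda_1'$ and recover the others by conjugation. I would then organize the argument by the length of the first row. For $\lambda_1 \in \{n, n-1, n-2\}$ the only candidates are $[n], [n-1,1], [n-2,2]$, and $[n-2,1,1]$, and the hook length formula gives dimensions at most $(n-1)(n-2)/2 < (n^2-n)/2$. For $\lambda_1 = n-3$, the candidates are $[n-3,3]$, $[n-3,2,1]$, and $[n-3,1,1,1]$; a direct hook length computation in each case shows the dimension exceeds $(n^2-n)/2$ once $n \geq 9$, ruling them out.

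The main obstacle is the regime $\lambda_1 \leq n-4$ with $\lambda_1' \leq \lambda_1$, where many partitions must be handled uniformly. Here I would argue by induction on $n$ via the branching rule: if $\mu \vdash n-1$ is obtained by removing a removable corner from $\lambda$, then $\chi_\mu$ is a constituent of $\resg{\chi_\lambda}{\sym{n-1}}$, hence $\chi_\lambda(1) \geq \chi_\mu(1)$. I would pick a corner so that $\mu$ still satisfies $\mu_1 \leq n-4$ and $\mu_1' \leq n-4$, and apply the induction hypothesis to conclude $\chi_\mu(1) \geq (n-1)(n-2)/2$. The delicate point is that this inductive estimate is weaker than the target $(n^2-n)/2$, so to close the gap I would either sum the dimensions over all removable corners of $\lambda$ (at least two exist in this regime, since a partition with $\lambda_1, \lambda_1' \leq n-4$ and $n \geq 9$ cannot be a single rectangle), or sharpen the hook length estimate by exploiting the presence of a $2 \times 2$ sub-rectangle in such $\lambda$.

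Finally, I would verify the base cases $n = 9$ and $n = 10$ by direct inspection of the character tables of $\sym{9}$ and $\sym{10}$, which pins down the threshold $n \geq 9$ in the statement and launches the induction.
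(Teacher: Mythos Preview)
The paper does not supply its own proof of this lemma; it quotes the result from \cite{MR3646689} and remarks that the proof there uses the branching rule. The detailed argument the paper gives for the companion Lemma~\ref{lem:tenspecial} is the natural template to compare against, and your plan is broadly in the same spirit: induction on $n$ via branching, with small $n$ handled by direct inspection. Your use of the conjugation symmetry $\chi_{\lambda'}(1)=\chi_\lambda(1)$ and the case split on $\lambda_1$ are reasonable organizational choices that the paper does not make explicit.

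There is, however, a concrete gap. In the regime $\lambda_1,\lambda_1'\leq n-4$ you assert that such a partition ``cannot be a single rectangle'' and hence has at least two removable corners. This is false: for example $[4,4,4]\vdash 12$ and $[3,3,3,3]\vdash 12$ are rectangles with $\lambda_1,\lambda_1'\leq 8=12-4$, and more generally any $[s^t]$ with $s,t\geq 3$ and $st=n$ lies in this regime. A rectangle has a unique removable box, so a single restriction yields only $\chi_\lambda(1)\geq\binom{n-1}{2}$, strictly below the target $\binom{n}{2}$. Your fallback of ``exploiting the presence of a $2\times 2$ sub-rectangle'' is not an argument as written.

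The device in the paper's proof of Lemma~\ref{lem:tenspecial} that handles exactly this obstruction is to restrict a rectangular $\chi_{[s^t]}$ \emph{two} levels down. By the branching rule $\resg{\chi_{[s^t]}}{\sym{n-2}}$ contains both $\chi_{[s^{t-1},\,s-2]}$ and $\chi_{[s^{t-2},\,(s-1)^2]}$; one checks that for the relevant $(s,t)$ neither of these is among the eight short partitions of $n-2$, so the induction hypothesis at level $n-2$ gives $\chi_\lambda(1)\geq 2\binom{n-2}{2}\geq\binom{n}{2}$ for $n\geq 9$. Carrying the induction hypothesis for both $n-1$ and $n-2$, as the paper does, and inserting this two-step descent for rectangles would close the gap in your plan.
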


This proof uses the branching rule, which we state here. 
For a proof of this rule see~\cite[Corollary 3.3.11]{MR2643487}.

\begin{lemma}\label{lem:branching}
Let $\lambda \vdash n$, then
\[
\resg{\chi_\lambda}{\sym{n-1}} = \sum \chi_{\lambda^-},
\]
where the sum is taken over all partitions $\lambda^{-}$ of $n-1$ that have a
Young diagram which can be obtained
by the deletion of a single box from the Young diagram of $\lambda$.
Further,
\[
\indg{\chi_\lambda}{\sym{n+1}} = \sum \chi_{\lambda^+},
\]
where the sum is taken over partitions $\lambda^+$ of $n+1$ that have a
Young diagram which can be obtained by the addition of a single box to
Young diagram of $\lambda$. \qed
\end{lemma}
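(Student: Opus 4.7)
The plan is to prove the restriction identity directly, using a filtration on the Specht module $S^\lambda$, and then derive the induction identity as a formal consequence via Frobenius reciprocity. Throughout, I work over $\mathbb{Q}$, so that Maschke's theorem applies and the group algebra $\mathbb{Q}\sym{n}$ is semisimple; the Specht module $S^\lambda$ realizes the irreducible character $\chi_\lambda$.

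First, I would fix an ordering $c_1, c_2, \ldots, c_r$ of the removable corners of the Young diagram of $\lambda$ (say, from bottom-left to top-right), letting $\lambda^{(i)}$ denote the partition of $n-1$ obtained by deleting $c_i$. Define a chain of subspaces
\[
0 = V_0 \subseteq V_1 \subseteq \cdots \subseteq V_r = S^\lambda,
\]
where $V_i$ is the span of all polytabloids $e_t$ for tableaux $t$ of shape $\lambda$ in which the entry $n$ occupies one of the corners $c_1, \ldots, c_i$. Since every element of $\sym{n-1}$ fixes $n$, and since the row stabilizer and signed column sum defining $e_t$ are unchanged by the location of $n$, each $V_i$ is $\sym{n-1}$-invariant.

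The key step is to identify the successive quotients: I would show that the linear map sending $e_t \in V_i$ (with $n$ sitting in $c_i$) to the polytabloid $e_{t'}$ of shape $\lambda^{(i)}$ obtained from $t$ by deleting $n$ together with its box, descends to a well-defined $\sym{n-1}$-module isomorphism $V_i / V_{i-1} \cong S^{\lambda^{(i)}}$. Well-definedness amounts to checking that the Garnir relations for $\lambda$, restricted to tableaux with $n$ in $c_i$ and read modulo $V_{i-1}$, reduce exactly to the Garnir relations for $\lambda^{(i)}$; this is the reason for ordering the corners carefully, so that any row-column straightening that moves $n$ out of $c_i$ lands inside a corner $c_j$ with $j<i$, hence inside $V_{i-1}$. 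Given semisimplicity, the filtration splits and
\[
\resg{\chi_\lambda}{\sym{n-1}} = \sum_{i=1}^r \chi_{\lambda^{(i)}},
\]
which is the first assertion.

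For the induction statement, I would appeal to Frobenius reciprocity: for any $\mu \vdash n+1$,
\[
\bigl\langle \indg{\chi_\lambda}{\sym{n+1}},\, \chi_\mu \bigr\rangle_{\sym{n+1}} = \bigl\langle \chi_\lambda,\, \resg{\chi_\mu}{\sym{n}} \bigr\rangle_{\sym{n}}.
\]
By the restriction identity just proved, the right-hand side is $1$ when $\lambda$ is obtained from $\mu$ by removing a single box (equivalently, $\mu$ is obtained from $\lambda$ by adding a single box) and $0$ otherwise, which gives the claimed decomposition of $\indg{\chi_\lambda}{\sym{n+1}}$. The main obstacle is the middle step: verifying that the quotient $V_i/V_{i-1}$ is isomorphic to $S^{\lambda^{(i)}}$ rather than merely having the right dimension. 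All of the technical content lives in checking that Garnir relations for $\lambda$ involving $n$ collapse, modulo $V_{i-1}$, to Garnir relations for $\lambda^{(i)}$, and this is where the ordering of corners does the real work.
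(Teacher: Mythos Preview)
The paper does not give its own proof here: the lemma carries a terminal \qed, and the sentence preceding it refers the reader to \cite[Corollary~3.3.11]{MR2643487}. The branching rule is used purely as known background, so there is no in-paper argument to compare against.

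Your outline is the standard Specht-module proof (as in James or Sagan): filter $S^\lambda$ as an $\sym{n-1}$-module according to which removable corner holds $n$, identify the successive quotients with the smaller Specht modules, and then deduce the induction statement via Frobenius reciprocity. The one concrete slip is the direction of the corner ordering. With your bottom-to-top convention the chain collapses already for $\lambda=(3,1)$: taking $c_1=(2,1)$ and $c_2=(1,3)$, the tableau $t$ with first row $2,1,3$ and second row $4$ has $n=4$ in $c_1$, but $e_t=e_{t_1}-e_{t_3}$ where $t_3$ has first row $1,3,4$ and second row $2$, so $e_{t_3}\in V_1$ and in fact $V_1=S^{\lambda}$. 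The fix is simply to reverse the order (top corner first, so that $c_i$ lies in row $r_i$ with $r_1<r_2<\cdots$); straightening then can only move $n$ upward, the $V_i$ are genuinely increasing, and the quotient identification you describe goes through.
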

 
Using the same approach as the proof for Lemma~\ref{lem:eightspecial} we can get a second 
family of representations with slightly larger, but still small dimension.

\begin{lemma}\label{lem:tenspecial}
  For $n \geq 13$, let $\chi$ be an irreducible representation of $\sym{n}$ with
  degree less than $\binom{n}{3} - \binom{n}{2}$. If $\chi_\lambda$ is a constituent of
  $\chi$, then $\lambda$ is one of the following partitions of $n$:
\begin{align*}
 & [n],  \quad [1^n],  \quad [n-1,1], \quad [2,1^{n-2}],  \quad   [n-2,2],  \quad  [2,2,1^{n-4}], & \\
 & [n-2,1,1],    \quad  [3,1^{n-3}], \quad [n-3,3],  \quad  [2,2,2,1^{n-6}]. &
\end{align*}
\end{lemma}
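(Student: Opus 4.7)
The plan is to follow the same strategy used in the proof of Lemma~\ref{lem:eightspecial}: combine the branching rule of Lemma~\ref{lem:branching} with a shape analysis to classify the small-dimensional irreducibles of $\sym{n}$. Since $\chi$ is irreducible, $\chi = \chi_\lambda$ for a unique partition $\lambda \vdash n$, so the task is to show that $\dim \chi_\lambda < \binom{n}{3} - \binom{n}{2} = n(n-1)(n-5)/6$ forces $\lambda$ into the given ten-partition list. Small base cases (beginning with $n = 13$) can be handled directly by enumerating partitions of $n$ and computing their dimensions via the hook length formula.

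The induction step begins by restricting: $\resg{\chi_\lambda}{\sym{n-1}} = \sum_{\lambda^-} \chi_{\lambda^-}$, where the sum runs over the removable boxes of $\lambda$, and each constituent satisfies $\dim \chi_{\lambda^-} \leq \dim \chi_\lambda$. If some constituent has dimension strictly less than $(n-1)(n-2)/2$, then Lemma~\ref{lem:eightspecial} applied to $\sym{n-1}$ forces $\lambda^-$ into the eight-partition list for $n-1$, and $\lambda$ must be obtained by adding a single box to one of those shapes. This yields a short explicit list of candidates for $\lambda$; each candidate is either already in the ten-partition list or can be eliminated by a direct hook-length computation showing $\dim \chi_\lambda \geq \binom{n}{3} - \binom{n}{2}$.

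If instead every constituent $\chi_{\lambda^-}$ has dimension at least $(n-1)(n-2)/2$, then summing over the removable boxes yields $r \cdot (n-1)(n-2)/2 \leq \dim \chi_\lambda < n(n-1)(n-5)/6$, where $r$ is the number of removable boxes of $\lambda$. For $n \geq 13$ this forces $r$ to be small (roughly at most three), so, after replacing $\lambda$ by its conjugate if needed so that $\lambda_1 \geq \lambda'_1$, the shape of $\lambda$ becomes highly restricted---near-rectangular or with very few corners. The finite list of such shapes can then be treated case by case using the hook length formula.

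The main obstacle is bridging the gap between the bound $\binom{n}{3} - \binom{n}{2}$ that controls $\chi_\lambda$ on $\sym{n}$ and the weaker bound $(n-1)(n-2)/2$ supplied by Lemma~\ref{lem:eightspecial} on $\sym{n-1}$: constituents whose dimensions lie in the intermediate range are not directly classified. Closing this gap requires either an inductive appeal to Lemma~\ref{lem:tenspecial} at $\sym{n-1}$ (whose bound $\binom{n-1}{3} - \binom{n-1}{2}$ is itself strictly smaller than the $\sym{n}$ bound, so a narrow residual range still needs handling) or a careful enumeration of the candidate shapes of $\lambda$ with few removable corners together with a direct hook-length estimate for each. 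Carrying out this case analysis without overlooking borderline shapes is expected to be the most technically tedious part of the proof.
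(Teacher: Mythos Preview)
Your proposal contains a genuine arithmetic error that breaks the argument. In the second branch you claim that if every constituent $\chi_{\lambda^-}$ of $\resg{\chi_\lambda}{\sym{n-1}}$ has dimension at least $(n-1)(n-2)/2$, then $r\cdot (n-1)(n-2)/2 < n(n-1)(n-5)/6$ forces the number $r$ of removable corners to be ``roughly at most three''. But this inequality gives $r < \frac{n(n-5)}{3(n-2)}$, which grows like $n/3$; already at $n=19$ it allows $r\le 5$, and for large $n$ there is no uniform bound at all. So the promised ``finite list of near-rectangular shapes'' is not finite, and the case analysis cannot be carried out.

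The fix---which you allude to but do not commit to---is to induct on the present lemma rather than fall back on Lemma~\ref{lem:eightspecial}. With the stronger threshold $\binom{n-1}{3}-\binom{n-1}{2}=(n-1)(n-2)(n-6)/6$ for constituents at level $n-1$, the same counting gives $r < \frac{n(n-5)}{(n-2)(n-6)}$, which is strictly less than $2$ for all $n\ge 13$; hence $r=1$ and $\lambda$ is rectangular. The ``narrow residual range'' you worry about is then handled by restricting one step \emph{further}: a rectangular $\lambda=[s^t]$ restricts to a single shape at level $n-1$, but at level $n-2$ it splits into two shapes $[s^{t-1},s-2]$ and $[s^{t-2},(s-1)^2]$, and one checks (using the inductive hypothesis at level $n-2$) that neither can lie in the ten-partition list unless $st$ is too small. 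This is exactly the route the paper takes: it runs the induction from $n$ and $n-1$ to $n+1$, uses the ten-partition inductive hypothesis (not Lemma~\ref{lem:eightspecial}) to force either $\ge 2$ large constituents (dimension too big) or a single rectangular constituent, and then disposes of the rectangular case by a second restriction.
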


\proof  
The hook length formula confirms that each of the 10 representations above 
have degree less than or equal to  $\binom{n}{3} - \binom{n}{2}$.

We prove this result by induction. For $n=13$ and $14$ this can be calculated directly
using the GAP character table library~\cite{GAP4}. 
We assume for $n\geq 14$ that the lemma holds for $n$ and
$n-1$, and we will prove that the lemma holds for $n+1$.

Assume that $\chi$ is an irreducible representation of $\sym{n+1}$ that has
dimension less than 
\[
\binom{n+1}{3} - \binom{n+1}{2} = \frac{(n+1)n(n-4)}{6},
\]
but is not one of the ten irreducible representations listed in the statement of the lemma. 
We will show that such a $\chi$ cannot exist.

If one of the ten irreducible representations of $\sym{n}$ with dimension less than
$\binom{n}{3} - \binom{n}{2}$ is a constituent of $\resg{\chi}{\sym{n}}$, then we can determine 
the possible constituents of $\chi$ with the branching rule. 

\renewcommand{\arraystretch}{1.25}
\begin{table}[h!]
\begin{center}
\begin{tabular}{|l|l|} \hline
Constituent of $\resg{\chi}{\sym{n}}$  & Constituents of $\chi$ \\ \hline
$[n]$ & $[n+1]$, $[n,1]$\\   
$[n-1,1]$ & $[n,1]$, $[n-1,2]$, $[n-1,1,1]$ \\ 
$[n-2,2]$ & $[n-1,2]$, $[n-2,3]$, $[n-2,2,1]$ \\  
$[n-2,1,1]$ & $[n-1,1,1]$, $[n-2,2,1]$, $[n-2,1,1,1]$ \\ 
$[n-3, 3]$ & $[n-2,3]$, $[n-3,4]$, $[n-3,3,1]$ \\ 
$[1^n]$ & $[2,1^{n-1}]$, $[1^{n+1}]$ \\  
$[2,1^{n-2}]$ & $[3,1^{n-2}]$, $[2,2,1^{n-3}]$, $[2,1^{n-1}]$ \\ 
$[2,2, 1^{n-4}]$ & $[3,2,1^{n-4}]$, $[2,2,2,1^{n-5}]$, $[2,2,1^{n-3}]$ \\ 
$[3, 1^{n-3}]$ & $[4,1^{n-3}]$, $[3,2,1^{n-4}]$, $[3,1^{n-2}]$ \\  
$[2,2,2, 1^{n-6}]$ & $[3,2,2,1^{n-6}]$, $[2,2,2,2,1^{n-7}]$, $[2,2,2,1^{n-5}]$ \\ \hline 
\end{tabular}
\caption{Constituents of $\chi$, if $\resg{\chi}{\sym{n}}$ has a constituent with degree less than $\binom{n}{3} - \binom{n}{2}$.}\label{tab:tableofsmallreps}
\end{center}
\end{table}
\renewcommand{\arraystretch}{1}

\renewcommand{\arraystretch}{1.25}
\begin{table}[h!]
\begin{center}
\begin{tabular}{|l|l|} \hline
Representation & Degree \\ \hline
$[n-3,4]$ & $(n+1)n(n-1)(n-7)/24$ \\
$[n-3,3,1]$ & $(n+1)n(n-2)(n-5)/8$ \\
$[n-2,2,1] $& $(n+1)(n-1)(n-3)/3$\\
$[n-2,1,1,1] $& $n(n-1)(n-2)/6$\\
$[2,2,2,2,1^{n-8}]$ & $(n+1)n(n-1)(n-7)/24$ \\
$[3,2,2,1^{n-6}]$ & $(n+1)n(n-2)(n-5)/8$\\
$[3,2,1^{n-4}]$&  $(n+1)(n-1)(n-3)/3$ \\
$[4,1^{n-3}] $& $n(n-1)(n-2)/6$ \\ \hline
\end{tabular}
\caption{Degrees of the representations from Table~\ref{tab:tableofsmallreps} that
are larger than $ \frac{(n+1)n(n-4)}{6}$ for $n \geq 13$.}\label{tab:mediumreps}
\end{center}
\end{table}
\renewcommand{\arraystretch}{1}

By Frobenius reciprocity,  for any representation $\phi$ of $\sym{n}$ 
\[
\langle \resg{\chi}{\sym{n}} , \, \phi \rangle_{\sym{n}} =
\langle  \chi , \, \indg{\phi}{\sym{n+1}}  \rangle_{\sym{n+1} }.
\]
This means if $\phi$ is a constituent of $\resg{\chi}{\sym{n}}$, then $\chi$ is one of the constituents of $\indg{\phi}{\sym{n+1}}$. 
The possible constituents of $\indg{\phi}{\sym{n+1}}$ are recorded in Table~\ref{tab:tableofsmallreps}; 
the second column lists the irreducible representations that, according to the branching rule, are constituents of 
representation of $\sym{n+1}$ induced by the representation in the first column.

From these lists, and the degrees of the representations given in Table~\ref{tab:mediumreps}, we see that either $\chi$ is one of  the ten listed in the theorem, or the dimension of $\chi$ is larger than $\binom{n}{3} - \binom{n}{2}$  (again, the dimensions are calculated using the hook length formula). 
Thus $\resg{\chi}{\sym{n}}$ does not contain any of the ten irreducible representations of $\sym{n}$ in the statement of the theorem.

Next consider the case where the decomposition of $\resg{\chi}{\sym{n}}$ contains at least two irreducible
representations of $\sym{n}$ which are not in the list of the ten
irreducible representations with dimension less 
$\binom{n}{3} - \binom{n}{2} = n(n-1)(n-5)/6$. In this case, the
dimension of $\chi$ must be at least $n(n-1)(n-5)/3$.
But since $n>7$, this is strictly larger than $(n+1)n(n-4)/6$. 

Finally we need to consider the case where $\resg{\chi}{\sym{n}}$ contains exactly one irreducible representation of $\sym{n}$,
which is not one of the ten listed in the theorem. By the branching rule 
the only irreducible representations of $\sym{n+1}$ for which
 $\resg{\chi}{\sym{n}}$ contains only one irreducible representation have a rectangular Young diagram, so 
$\chi = \chi_{[s^t]}$ for some $s$ and $t$.

Next consider $\resg{\chi}{\sym{n-1}}$, this is the restriction of $\chi = \chi_{[s^t]}$ to $\sym{n-1}$.
By the branching rule, this can contain only the irreducible representations
of $n-1$ that correspond to the partitions $\lambda' = [s^{t-1}, s-2]$ and
$\lambda'' = [s^{t-2}, s-1, s-1]$. 

If $\lambda'$ is one of the ten partitions that correspond to irreducible
representations of $\sym{n-1}$ dimension less than
$\binom{n-1}{3} - \binom{n-1}{2}$, then one of the following cases must hold
\begin{itemize}
\item $t=1$ and $\lambda' = [n-1]$ and $s=n+1$;
\item $t=2$ and $\lambda' = [n-1,1], [n-2,2]$ or $[n-3,3]$, and $s \leq 5$; or
\item $2<t<4$ and and $s\leq 2$.
\end{itemize}
The first of these cases implies $\chi = [n+1]$, which contradicts the dimension of $\chi$, and 
none of the other cases can happen, since $n = st$ and $n$ is
assumed to be at least $13$.

Similarly, assume $\lambda''= [s^{t-2}, s-1, s-1]$ is one of the partitions corresponding to the ten representations of
$\sym{n-1}$ that have dimension less than $\binom{n-1}{3} - \binom{n-1}{2}$. Then one of the following cases must hold:
\begin{enumerate}
\item $t=2$ and $\lambda''= [s-1,s-1]$ and $s\leq 4$;
\item $2< t \leq 5$ and $\lambda''= [s^{t-1}, 1, 1]$ and $s\leq2$; or
\item $s=1$. 
\end{enumerate}
The first two cases imply that $n \leq 10$ and the final case implies that 
$\chi = [1^{(n+1)}]$ which has dimension 1.

Thus $\resg{\chi}{\sym{n-1}}$ has two representations with dimension at least 
$\binom{n-1}{3} - \binom{n-1}{2}$, so the dimension of $\chi$ is at least $(n-1)(n-2)(n-6)/3$.
which is strictly greater than $(n+1)n(n-4)/6$ for $n \geq 13$. This is a contradiction, so no such $\chi$ exists. 
\qed

Next we will show that there are only three irreducible representations in the decomposition of 
$\indg{ 1_{\sym{k} \wr \sym{\ell}}}{ \sym{k\ell} }$ that have dimension no more than $\binom{k\ell}{3}-\binom{k\ell}{2}$.
To do this we will consider the action of different Young subgroups on $\mathcal{U}_{k,\ell}$.
For any integer partition $\lambda \vdash n$ we will denote the Young subgroup by
\[
\sym{\lambda} = \sym{\lambda_1} \times \sym{\lambda_2} \times \dots \times \sym{\lambda_k}. 
\]

\begin{theorem}\label{thm:RepWeWant}
Assume $k\ell\geq 13$. Then the only partitions in the decomposition of $\indg{ 1_{\sym{k} \wr \sym{\ell}}}{ \sym{k\ell} }$ with dimension 
less than or equal to  $\binom{k\ell}{3}-\binom{k\ell}{2}$ are 
\[
\chi_{[k\ell]}, \quad \chi_{[k\ell-2,2]}, \quad \chi_{[k\ell-3,3]}.
\]
\end{theorem}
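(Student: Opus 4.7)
The plan is to combine Lemma~\ref{lem:tenspecial} with Frobenius reciprocity and a short orbit count. A hook-length computation gives $\dim \chi_{[k\ell-3,3]} = \binom{k\ell}{3}-\binom{k\ell}{2}$, and every other partition listed in Lemma~\ref{lem:tenspecial} has strictly smaller dimension, so any irreducible $\chi_\lambda$ of $\sym{k\ell}$ with $\dim \chi_\lambda \leq \binom{k\ell}{3}-\binom{k\ell}{2}$ must be one of those ten. Thus it will suffice to decide which of them occur as constituents of $\indg{1_{\sym{k}\wr\sym{\ell}}}{\sym{k\ell}}$.

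By Frobenius reciprocity, the multiplicity of $\chi_\lambda$ in $\indg{1_{\sym{k}\wr\sym{\ell}}}{\sym{k\ell}}$ equals the dimension of the $\sym{k}\wr\sym{\ell}$-invariant subspace of the Specht module $S^\lambda$. Because the Young subgroup $\sym{[k^\ell]}$ (the base group of the wreath product) sits inside $\sym{k}\wr\sym{\ell}$, this dimension is bounded above by the Kostka number $K_{\lambda,[k^\ell]}$. For each of the five partitions $[1^{k\ell}]$, $[2,1^{k\ell-2}]$, $[2,2,1^{k\ell-4}]$, $[3,1^{k\ell-3}]$ and $[2,2,2,1^{k\ell-6}]$ the first column has more than $\ell$ cells, so no column-strict filling with content $[k^\ell]$ exists, forcing $K_{\lambda,[k^\ell]}=0$; these five can therefore be eliminated immediately. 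The remaining candidates are $[k\ell]$, $[k\ell-1,1]$, $[k\ell-2,2]$, $[k\ell-2,1,1]$ and $[k\ell-3,3]$.

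For the five remaining partitions I would use the decomposition $\indg{1_{\sym{\mu}}}{\sym{k\ell}} \cong \bigoplus_\lambda K_{\lambda\mu}\chi_\lambda$ together with the fact that the dimension of the $\sym{k}\wr\sym{\ell}$-invariant subspace of $\indg{1_{\sym{\mu}}}{\sym{k\ell}}$ equals the number of $\sym{k}\wr\sym{\ell}$-orbits on $\mu$-tabloids. Taking $\mu$ successively to be $[k\ell-1,1]$, $[k\ell-2,2]$, $[k\ell-2,1,1]$ and $[k\ell-3,3]$, the $\mu$-tabloids are respectively single points, unordered pairs, ordered pairs of distinct points, and unordered triples of points; the numbers of $\sym{k}\wr\sym{\ell}$-orbits on these four sets are $1$, $2$, $2$ and $3$ (the value $3$ uses $k\geq 3$, so that three points can lie entirely in one block). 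Feeding these orbit counts and the standard Kostka numbers into the resulting upper unitriangular linear system and solving from top to bottom will assign multiplicities $1,0,1,0,1$ to $[k\ell], [k\ell-1,1], [k\ell-2,2], [k\ell-2,1,1], [k\ell-3,3]$, which is exactly the content of the theorem.

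The main obstacle is conceptual rather than technical: correctly matching the $\sym{k}\wr\sym{\ell}$-orbit counts to the Kostka-number decomposition so that the triangular system actually closes. Once this dictionary is set up, every verification reduces to a small SSYT or orbit computation, and the solution of the system is immediate.
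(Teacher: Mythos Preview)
Your argument is correct. For the five ``wide'' shapes $[k\ell],[k\ell-1,1],[k\ell-2,2],[k\ell-2,1,1],[k\ell-3,3]$ it is the paper's proof in dual form: the paper counts orbits of each $\sym{\mu}$ on $\mathcal{U}_{k,\ell}$ while you count orbits of $\sym{k}\wr\sym{\ell}$ on $\mu$-tabloids, and both numbers equal the number of $(\sym{\mu},\sym{k}\wr\sym{\ell})$-double cosets in $\sym{k\ell}$, so the resulting triangular system and its solution $1,0,1,0,1$ are identical. The genuine difference is in eliminating the five ``tall'' shapes. The paper handles these with three further orbit counts, now for subgroups of the form $\Alt(k\ell)\cap\sym{\mu}$, together with the explicit decomposition of the corresponding induced characters (which contain a partition and its conjugate). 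Your route---bounding the multiplicity of $\chi_\lambda$ by $K_{\lambda,[k^\ell]}$ via the inclusion $\sym{[k^\ell]}\leq\sym{k}\wr\sym{\ell}$ and noting that each tall $\lambda$ has more than $\ell$ rows (this reduces to $(k-1)\ell>3$)---kills all five at once by the dominance criterion for Kostka numbers. This is cleaner and avoids quoting the $\Alt$-induced decompositions; the paper's version, in exchange, stays entirely within orbit counting and needs no Kostka-number input. As in the paper, your triple-orbit count of $3$ tacitly uses $\ell\geq 3$, which you should make explicit in a full write-up.
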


\begin{proof}
Lemma~\ref{lem:tenspecial}, lists the 10 irreducible representations of $\sym{k\ell}$ 
with dimension no more than $\binom{k\ell}{3}-\binom{k\ell}{2}$. We only need to show which of these representations are in the decomposition. The tool we use is Frobenius reciprocity along with the action of different Young subgroups on $\mathcal{U}_{k, \ell}$.

By Frobenius reciprocity
\begin{align*}
 \langle \, \indg{1_{ \sym{\lambda} }}{ \sym{k\ell} } , \quad \indg{1_{\sym{k} \wr \sym{\ell} }}{ \sym{k\ell} } \, \rangle_{\sym{k\ell}}=&\\
 \langle \, 1 ,  \quad \resg{  \indg{ 1_{\sym{k} \wr \sym{\ell} } } { \sym{k\ell} }  } {\sym{\lambda}}  \, \rangle_{\sym{\lambda}}.&
\end{align*}
The second inner product above gives the number of orbits of the action of $\sym{\lambda}$ on the 
cosets $\sym{k\ell}/( \sym{k} \wr \sym{\ell})$; or, equivalently, the number of orbits of $\sym{\lambda}$ on the partitions in $\mathcal{U}_{k, \ell}$.
Using this fact with different Young subgroups will allow us to determine that many of the representations 
with small degree do not occur in the decomposition of $\indg{ 1_{\sym{k} \wr \sym{\ell} }}{ \sym{k\ell} }$.

To start, it is clear that $\sym{k\ell}$ has one orbit on the $(k,\ell)$-partitions, 
so $\chi_{[k \ell]}$ has multiplicity 1 in the decomposition.
Next consider the group $\sym{[k\ell-1,1]}$, it is also straight-forward that this group only has one orbit on the partitions. 
The following decomposition is well-known
\[
\indg{ 1_{\sym{[k\ell-1,1]}} } {\sym{k\ell}} = \chi_{[k\ell]} + \chi_{[k\ell -1,1]},
\]
so we have that
\[
\langle \chi_{[k\ell]} + \chi_{[k\ell -1,1]} ,  \indg { 1_{\sym{k} \wr \sym{\ell} } } { \sym{k\ell} }  \rangle = 1
\]
Since we know that $\chi_{[k\ell]}$ occurs in this decomposition with multiplicity 1, this implies that 
$\chi_{[k \ell-1,1]}$ does not occur in the decomposition of $\indg{1_{\sym{k} \wr \sym{\ell} }}{ \sym{k\ell} } $.

Next we consider the group $\sym{[k\ell-2,2]}$. This group has two orbits on the partitions of $\mathcal{U}_{k, \ell}$. 
Using the well-known decomposition of $\indg{ 1_{\sym{[k\ell-2,2]}} } {\sym{k\ell}}$ we have that
\begin{align*}
& \langle \, \indg{ 1_{\sym{[k\ell-2,2]}} } {\sym{k\ell}}, \quad  \indg{ 1_{\sym{k} \wr \sym{\ell} }}{ \sym{k\ell} }  \, \rangle = \\
& \langle \, \chi_{[k\ell]} + \chi_{[k\ell -1,1]} + \chi_{[k\ell-2,2]}, \quad \indg{ 1_{\sym{k} \wr \sym{\ell} }}{ \sym{k\ell} } \, \rangle
 = 2.
\end{align*}
This implies that $\chi_{[k\ell-2,2]} $ occurs in the decomposition of $\indg{1_{\sym{k} \wr \sym{\ell} }}{ \sym{k\ell} }$ with multiplicity 1.

We continue this process with the group $\sym{[k\ell-2,1,1]}$. It has two orbits on the partitions of $\mathcal{U}_{k,\ell}$. Since
\[
\indg{  1_{\sym{[k\ell-2,1,1]}} }{\sym{k\ell}} 
    = \chi_{[k\ell]} + \chi_{[k\ell-1,1]} + \chi_{[k\ell-2,2]} + \chi_{[k\ell-2,1,1]},
\]
we can conclude that $\chi_{[k\ell-2,1,1]}$ does not occur in the decomposition. 

Next, we consider the group $\alt{k\ell} \cap \sym{[k\ell-2]}$. This group has two orbits on the partitions of $\mathcal{U}_{k, \ell}$.  Again the decomposition of $\indg{ 1_{ \alt{k\ell} \cap \sym{[k\ell-2]} } } {\sym{k\ell}} $ is well-known (a proof can be found in \cite[Proposition 1.4]{MR2581098}) and we have 
\begin{align*}
\indg{ 1_{\alt{k\ell} \cap \sym{[k\ell-2]}  } } {\sym{k\ell}} 
 &= \chi_{[k\ell]} + \chi_{[k\ell-1,1]} + \chi_{[k\ell-2,2]} + \chi_{[k\ell-2,1,1]} \\
 & \quad + \chi_{[1^{k\ell}]} + \chi_{[2, 1^{k\ell-2}]} + \chi_{[2,2, 1^{k\ell-4}]} + \chi_{[3, 1^{k\ell-3}]} 
\end{align*}
this implies that none of $\chi_{[1^{k\ell}]}$, $\chi_{[2, 1^{k\ell-2}]}$, $\chi_{[2,2, 1^{k\ell-4}]}$ and 
$\chi_{[3, 1^{k\ell-3}]}$ occur in the decomposition of $\indg{1_{\sym{k} \wr \sym{\ell} }}{ \sym{k\ell} }$.

Next we consider the group $\sym{[k\ell-3,3]}$. This group has three orbits on the partitions of $\mathcal{U}_{k,\ell}$ and from the 
decomposition of $\indg{ 1_{\sym{[k\ell-3,3]}} } {\sym{k\ell}}$ we have that
\begin{align*}
& \langle \indg{ 1_{ \sym{[k \ell-3,3]}} } {\sym{k \ell}}, \indg{ 1_{\sym{k}  \wr \sym{\ell} }}{ \sym{k \ell} }  \rangle =\\
& \langle \chi_{[k\ell]} + \chi_{[k\ell -1,1]} + \chi_{[k\ell-2,2]}+ \chi_{[k\ell-3,3]}, 
            \indg{   1_{\sym{k} \wr \sym{\ell}  } }  { \sym{k\ell} }  \rangle
 = 3.
\end{align*}
This implies that $\chi_{[k\ell-3,3]} $ occurs in the decomposition of $\indg{1_{\sym{k} \wr \sym{\ell}}}{ \sym{k\ell} }$ with multiplicity 1.

Next we consider the alternating group $\alt{k\ell} \cap \sym{[k\ell-2,1,1]}$. This group has 2 orbits on the partitions of 
$\mathcal{U}_{k, \ell}$ and 
\begin{align*}
\indg{ 1_{ \alt{k\ell} \cap \sym{[k\ell-2,1,1]} } }{\sym{k\ell}} 
  &= \chi_{[k\ell]} + \chi_{[k\ell-1,1]} + \chi_{[k\ell-2,2]} + \chi_{[k\ell-2,1,1]} \\
  & \quad \chi_{[1^{k\ell}]} + \chi_{[2, 1^{k\ell-2} ]} + \chi_{[2,2,1^{k\ell-4}]} + \chi_{[3, 1^{k\ell-3}]}.
  \end{align*}
Since $\chi_{[k\ell]}$, and $\chi_{[k\ell-2,2]}$ are in the decomposition, none of the irreducible representations
 $\chi_{[1^{k\ell}]}$, $\chi_{[2, 1^{k\ell-2}]}$, $\chi_{[2, 2, 1^{k\ell-4}]}$, or $\chi_{[3,1^{k\ell-3}]} $ occur in the decomposition of $\indg{1_{\sym{k} \wr \sym{\ell}}}{ \sym{k\ell} }$.

Finally, we consider the alternating group $\alt{k\ell} \cap \sym{[k\ell-3,3]}$. This group has three orbits on the partitions of 
$\mathcal{U}_{k, \ell}$ and 
\begin{align*}
\indg{ 1_{ \alt{k\ell} \cap \sym{[k\ell-3,3]} } }{\sym{k\ell}} 
  &= \chi_{[k\ell]} + \chi_{[k\ell-1,1]} +\chi_{[k\ell-2,2]} +\chi_{[k\ell-3,3]} \\
 & \quad + \chi_{[1^{k\ell}]} + \chi_{[2, 1^{k\ell-2}]} +\chi_{[2^2, 1^{k\ell-4}]} +\chi_{[2^3, 1^{k\ell-6}]}.
\end{align*}
Which shows $\chi_{[2,2,2,1^{k\ell-6}]} $ is not in the decomposition of $\indg{1_{\sym{k} \wr \sym{\ell}}}{ \sym{k\ell} }$.
\end{proof}

\section{Eigenvalues of $X_{k,\ell}$ with $k \geq 3$}\label{sec:evals}

In this section we will find three of the eigenvalues of $X_{k,\ell}$. For ease of notation, we will denote the irreducible representation of $\chi_\lambda$ by the $\lambda$-module. Also, the number of vertices in $X_{k,\ell}$,  which is equal to $u_{k,\ell}$, will be denoted simply by $v$ and the degree of the graph $X_{k, \ell}$ will be simply written as $d$, rather than $d_{k,\ell}$. 

Any subgroup $H \leq \sym{k\ell}$ acts on the vertices of $X_{k, \ell}$ and the orbits of this action for an equitable partition. From any equitable partition, we can form a quotient graph and the eigenvalues of this quotient graph will be eigenvalues of the $X_{k, \ell}$ (details can be found in~\cite[Section 2.2]{MR3646689}). 
The trivial case is $H = \sym{k\ell}$, since this group is transitive, the equitable partition has all the 
vertices of $X_{k, \ell}$ in a single part. The quotient graph for this simply the $1 \times 1$ matrix with the single entry 
$d$.  The eigenvalue of this matrix is simply $d$, and the eigenvector is the all ones vector and the eigenspace is 
isomorphic to the trivial representation of $\sym{k\ell}$. So $d$ belongs to the $[k\ell]$-module.

Since the subgroup $\sym{[k\ell-1,1]}$ has only one orbit on the vertices of $X_{k,\ell}$, the next subgroup we consider 
is the Young subgroup $\sym{[k\ell -2,2]}$, considered as the stabilizer of the set $\{1,2\}$. This subgroup is not 
transitive on the partitions, it has exactly 2 orbits: 
$S_1$ the set of all partitions that have 1 and 2 in the same block, and $S_2$ the 
set of all partitions in which 1 and 2 are in different blocks. The first orbit, $S_1$ is a coclique in $X_{k,\ell}$ so the quotient 
matrix for this partition has the form
\[
\begin{pmatrix}
0 & d \\
-\tau & d+\tau 
\end{pmatrix} .
\]
The eigenvalues of this quotient graph are $d$ and $\tau$. We can calculate the value of $\tau$ by counting edges between $S_1$ and $S_2$. Since $S_1$ is a coclique, each vertex in $S_1$ is adjacent to $d$ vertices in $S_2$, and each vertex in $S_2$ is adjacent to $-\tau$ vertices in $S_1$. Using the sizes of $S_1$ and $S_2$, we have that the number of edges between $S_1$ and $S_2$ is equal to
\[
|S_1| d = \binom{k\ell-2}{k-2} u_{k,\ell-1} d
\]
and also to
\[
|S_2|(-\tau) =  \binom{k\ell -2}{k-1}\binom{k\ell-k-1}{k-1} u_{k,\ell-2} (-\tau).
\]
Thus 
\begin{equation}\label{eq:tau}
\tau = -\frac{(k-1)d}{k(\ell-1)}
\end{equation}
is a second eigenvalue for $X_{k, \ell}$. Since this eigenvalue arises from the action of $\sym{[k\ell-2,2]}$, it belongs
to a module that is common between the two representations
\[
 \indg{ 1_{\sym{k}  \wr \sym{\ell} }}{ \sym{k \ell} } 
 \quad 
 \quad
  \indg{ 1_{\sym{ [k \ell - 2, 2] } } } { \sym{k \ell} } 
\]
Thus it belongs to the module $[k\ell-2,2]$, as this is the only common module, and must have dimension at least $\binom{k\ell}{2} - \binom{k\ell}{1}$. (A second irreducible module could also have $\tau$ as the eigenvalue belonging to it, so the dimension could be higher.)

\begin{lemma}\label{lemma:multTau}
For integers $k$ and $\ell$, with $k,\ell  \geq 2$, 
$\tau  = -\frac{(k-1)d}{k(\ell-1)}$ is an eigenvalue of $X_{k,\ell}$ with multiplicity at least $\binom{k\ell}{2} - \binom{k\ell}{1}$.\qed
\end{lemma}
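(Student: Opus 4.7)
The plan is to leverage the equitable partition on $V(X_{k,\ell})$ induced by the Young subgroup $\sym{[k\ell-2,2]}$, whose quotient matrix was just computed to have $d$ and $\tau$ as its only two eigenvalues. Thus $\tau$ is certainly \emph{some} eigenvalue of $X_{k,\ell}$; what remains is to identify which irreducible $\sym{k\ell}$-modules its eigenspace contains, and to extract a lower bound on the multiplicity.

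First I would note that because $A(X_{k,\ell})$ commutes with the permutation representation of $\sym{k\ell}$, every eigenspace is a $\sym{k\ell}$-submodule of $\indg{1_{\sym{k}\wr\sym{\ell}}}{\sym{k\ell}}$, and hence decomposes as a sum of irreducibles appearing in this induced representation. Moreover, the $\tau$-eigenvector arising from the equitable partition is, by construction, constant on the orbits of $\sym{[k\ell-2,2]}$ and therefore lies in the space of $\sym{[k\ell-2,2]}$-invariants, a subspace isomorphic to $\indg{1_{\sym{[k\ell-2,2]}}}{\sym{k\ell}}$. So any irreducible module contributing to the $\tau$-eigenspace must be a common constituent of both induced representations.

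Next I would pin down the common constituents. The standard decomposition
\[
\indg{1_{\sym{[k\ell-2,2]}}}{\sym{k\ell}} = \chi_{[k\ell]} + \chi_{[k\ell-1,1]} + \chi_{[k\ell-2,2]}
\]
combined with Theorem~\ref{thm:RepWeWant} (which records that $\chi_{[k\ell-1,1]}$ does \emph{not} appear in $\indg{1_{\sym{k}\wr\sym{\ell}}}{\sym{k\ell}}$) shows that the only common constituents are $\chi_{[k\ell]}$ and $\chi_{[k\ell-2,2]}$. Since $d \neq \tau$ when $k,\ell\geq 2$, and since $d$ already belongs to the trivial module $\chi_{[k\ell]}$, the $\tau$-eigenspace must contain the full $[k\ell-2,2]$-module. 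By the hook-length formula, $\dim\chi_{[k\ell-2,2]} = \binom{k\ell}{2}-\binom{k\ell}{1}$, giving the claimed lower bound.

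The main subtlety to address cleanly is ensuring that the $\tau$-eigenvector genuinely carries a nonzero component in the $[k\ell-2,2]$-isotypic piece rather than being absorbed into $\chi_{[k\ell]}$. This is forced, however, because the $d$-eigenspace restricted to the two-dimensional space of $\sym{[k\ell-2,2]}$-invariants is exactly one-dimensional (spanned by $\mathbf{1}$), so the second eigenvector of the $2\times 2$ quotient is orthogonal to $\mathbf{1}$ and must pick up a nontrivial irreducible summand — necessarily $\chi_{[k\ell-2,2]}$ by the previous paragraph. I expect this dimension-counting step to be the easy end, with the real content supplied by Theorem~\ref{thm:RepWeWant}.
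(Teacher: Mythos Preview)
Your proposal is correct and follows essentially the same line as the paper: use the two-orbit equitable partition from $\sym{[k\ell-2,2]}$, identify $\tau$ as the non-trivial quotient eigenvalue, and argue that it must belong to the $[k\ell-2,2]$-module since $\chi_{[k\ell-1,1]}$ is absent from $\indg{1_{\sym{k}\wr\sym{\ell}}}{\sym{k\ell}}$. One small caution: Theorem~\ref{thm:RepWeWant} carries the hypothesis $k\ell\geq 13$, so for the full range $k,\ell\geq 2$ you should instead cite directly the orbit computation (established inside that proof and valid for all $k,\ell\geq 2$) that $\sym{[k\ell-1,1]}$ acts transitively on $\mathcal{U}_{k,\ell}$, which is what forces $\chi_{[k\ell-1,1]}$ out of the decomposition.
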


Next we will consider the Young subgroup $\sym{ [k\ell-3,3] }$, thought of as the group that stabilizes the set $\{1,2,3\}$. 
The action of this subgroup on $\mathcal{U}_{k,\ell}$ has 3 orbits: $T_1$, the set of all partitions with $1, 2,3$ in the same block; $T_2$ the set of all partitions in which $1, 2, 3$ are in exactly two different blocks; and $T_3$ the set of all partitions in which $1, 2, 3$  are in three different blocks. Any vertex in $T_1$ is adjacent only to vertices in $T_3$. Similarly, a vertex in $T_2$  can be adjacent to vertices in $T_2$ and $T_3$. The quotient graph for this equitable partition is 
\[
M = \begin{pmatrix}
0 & 0 & d \\
0 & a & d-a  \\
b & c & d-b-c
\end{pmatrix} .
\]
where $a,b,c$ are all non-negative. 

The eigenvalues for this quotient graph will be the eigenvalues that belong to modules that are both the decomposition of 
$\indg{1_{\sym{[k\ell-3,3]}}}{\sym{k\ell}}$ and the decomposition of $\indg{1_{\sym{k} \wr \sym{\ell} }}{\sym{k\ell}}$. 
Thus the eigenvalues will belong to the $[k\ell]$, $[k\ell-2,2]$ and $[k\ell-3,3]$ modules. We have already seen that 
the eigenvalue for $[k\ell]$ is $d$, and the eigenvalue for $[k\ell-2,2]$ is $\tau$. We will denote the eigenvalue belonging to $[k\ell-3,3]$ by $\theta$.

Since the trace of the matrix is the sum of the eigenvalues we have that
\begin{align}\label{eq:trace}
d+a-b-c = d+\tau+\theta.
\end{align}

The number of edges between $T_1$ and $T_3$ is equal to 
\[
 d |T_1|  =  d \, \binom{k\ell-3}{k-3} u_{k,\ell-1} , 
\] 
 and also to
\[
b |T_3| =  b \,  \binom{k\ell-3}{k-1}\binom{k\ell-k-2}{k-1}\binom{k\ell-2k-1}{k-1} u_{k,\ell-3}.
 \]
Setting these equations equal to each other, then expanding the binomial coefficients and rearranging yields
\[
\frac{ (k-1)(k-2) }{k^2(\ell-1)(\ell-2)}  d =   b. 
\]
Replacing $d = -\frac{k(\ell-1)}{k-1} \tau$ shows that
\begin{align}\label{eq:bvalue}
b = -\frac{ (k-1)(k-2) }{k^2(\ell-1)(\ell-2)}   \frac{k(\ell-1)}{(k-1)} \tau = -\frac{k-2 }{k (\ell-2)}  \tau.
\end{align}
Put this into Equation~\ref{eq:trace} to get the following formula for $\theta$
\begin{equation}\label{eq:theta2}
\theta = a+\frac{k-2 }{k (\ell-2)}  \tau-c-\tau 
= a -c + \frac{ (k-2) - k(\ell-2) }{k (\ell-2)}  \tau.
\end{equation}

Similarly, counting the number of edges between $T_2$ and $T_3$ yields
\[
3 \binom{k\ell-3}{k-2}\binom{k\ell-k-1}{k-1} u_{k,\ell-2} (d-a) 
= \binom{k\ell-3}{k-1} \binom{k\ell - k -2}{k-1} \binom{k\ell-2k-1}{k-1} u_{k,\ell-3} (c) \\ 
\]
Again, expanding the binomial coefficients and rearranging shows that
\[
a =   d- \frac{(\ell-2)k}{3(k-1)} c.
\]

The characteristic polynomial of $M$ is
\[
  x^3 + (-a + b + c - d)x^2 + (-ab + ad - bd - cd)x + abd
\]
Substituting in the values we have computed for $b$ and $c$, and using the fact that $\tau$ is a root of the 
characteristic polynomial we get
\begin{equation}\label{eq:avalue}
  a =  \frac{2(k-1)}{k(\ell-1)}d.
\end{equation}
From this we can compute that 
\begin{equation}\label{eq:cvalue}
c = \frac{3(k\ell-3k+2)(k-1)}{k^{2}(\ell-1)(\ell-2)}d.
\end{equation}

\begin{lemma}\label{lemma:multTheta}
For integers $k$ and $\ell$, with $k,\ell \geq 3$, 
\[
\theta = \frac{2(k-1)(k-2)d}{k^{2}(\ell-1)(\ell-2)}
\]
is an eigenvalue of $X_{k,\ell}$ with multiplicity at least $\binom{k\ell}{3} - \binom{k\ell}{2}$.
\end{lemma}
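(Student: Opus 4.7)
The plan is a direct computation building on the framework developed in the paragraphs immediately preceding the lemma. All the technical work has really been done already: the quotient matrix $M$ for the equitable partition $\{T_1,T_2,T_3\}$ induced by $\sym{[k\ell-3,3]}$ has been written down, and we know its eigenvalues are $d$, $\tau$, and a third value $\theta$ belonging to the $[k\ell-3,3]$-module. All that remains is to evaluate $\theta$ explicitly and justify the multiplicity bound.

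For the value of $\theta$, I would substitute the formulas for $a$ (Equation~\ref{eq:avalue}), $c$ (Equation~\ref{eq:cvalue}), and $\tau$ (Equation~\ref{eq:tau}) into the trace identity~\ref{eq:theta2}. Collecting everything over the common denominator $k^{2}(\ell-1)(\ell-2)$ and factoring out $(k-1)d$, the bracketed expression reduces to
\[
3k(\ell-2) - 3(k\ell-3k+2) - (k-2) = 2(k-2),
\]
after the $3k\ell$ terms cancel. This yields the stated formula
\[
\theta = \frac{2(k-1)(k-2)d}{k^{2}(\ell-1)(\ell-2)}.
\]
This is a short algebraic simplification, so I would not expect any obstacle here beyond bookkeeping.

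For the multiplicity, I would appeal to the discussion preceding Lemma~\ref{lemma:multTau}: the eigenvalues of the quotient matrix for the action of $\sym{[k\ell-3,3]}$ must belong to modules that appear in the decomposition of both $\indg{1_{\sym{[k\ell-3,3]}}}{\sym{k\ell}}$ and $\indg{1_{\sym{k} \wr \sym{\ell}}}{\sym{k\ell}}$. The common irreducible constituents are exactly $[k\ell]$, $[k\ell-2,2]$ and $[k\ell-3,3]$ (by Theorem~\ref{thm:RepWeWant} together with the standard decomposition of the permutation representation on $2$-subsets and $3$-subsets). Since $d$ belongs to $[k\ell]$ and $\tau$ belongs to $[k\ell-2,2]$, by elimination $\theta$ must belong to $[k\ell-3,3]$.

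Finally, because $\theta$ belongs to the $[k\ell-3,3]$-module, the entire module lies inside the $\theta$-eigenspace, so the multiplicity of $\theta$ is at least $\dim \chi_{[k\ell-3,3]}$. The hook length formula (or the known dimension formula for two-row partitions) gives
\[
\dim \chi_{[k\ell-3,3]} = \binom{k\ell}{3} - \binom{k\ell}{2},
\]
which is precisely the bound claimed. The only mildly delicate point is ruling out that $\theta$ accidentally coincides with $d$ or $\tau$ and thus that the assignment of $\theta$ to the $[k\ell-3,3]$-module is unambiguous; for $k,\ell \geq 3$ this follows from a quick inspection of the three explicit rational expressions, which have distinct values. \qed
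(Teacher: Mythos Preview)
Your proposal is correct and follows essentially the same approach as the paper: substitute the already-derived values of $a$, $c$, and $\tau$ into the trace identity~(\ref{eq:theta2}) to compute $\theta$, then use that $\theta$ belongs to the $[k\ell-3,3]$-module (as established in the discussion preceding the lemma) to obtain the multiplicity bound from that module's dimension. Your additional check that $\theta \neq d,\tau$ is unnecessary for the stated \emph{lower} bound on the multiplicity (coincidence would only increase it), and the appeal to Theorem~\ref{thm:RepWeWant} is slightly misplaced since the identification of the three common constituents is already carried out in the preceding text without the $k\ell \geq 13$ hypothesis, but otherwise the argument matches the paper's.
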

\begin{proof}
By Equations~(\ref{eq:theta2}),~(\ref{eq:avalue})  and~(\ref{eq:cvalue}), we can calculate that
\begin{equation}\label{eq:thetafinal}
\theta = \frac{2(k-1)(k-2)d}{k^{2}(\ell-1)(\ell-2)}.
\end{equation}

From the comments above, $\theta = \frac{2(k-1)(k-2)d}{k^{2}(\ell-1)(\ell-2)}$ is the eigenvalue belonging to the unique $[k\ell-3,3]$-module in $ \indg{ 1_{\sym{k}  \wr \sym{\ell} }}{ \sym{k \ell} } $.
Since the dimension of the irreducible representation of $[k\ell-3,3]$ is $\binom{k\ell}{3} - \binom{k\ell}{2}$, the multiplicity 
of $\theta$ is at least $\binom{k\ell}{3} - \binom{k\ell}{2}$.
\end{proof}

\section{Bound on degree of $X_{k,\ell}$}\label{sec:degree}

In this section we will find a lower bound the degree of $X_{k,\ell}$ for all sufficiently large $\ell$. If $P$ and $Q$ are two partitions that are adjacent in $X_{k,\ell}$, then the meet table of $P$ and $Q$ is an $\ell \times \ell$ matrix with entries either 0 or 1, and further, the entries in each row and column in the meet table sum to $k$. We define $\mathcal{M}_{k,\ell}$ to be the set of all such meet tables, so all $\ell \times \ell$ matrices with entries either 0 or 1, and row and columns sums equal to $k$. To find the degree of $X_{k,\ell}$, we first, we state a result on the number of such meet tables. Next, for a fixed partition $P$ and a meet table $M \in \mathcal{M}_{k,\ell}$, we count the number of partitions $Q$ for which the meet table of $P$ and $Q$ is $M$. 

Bender~\cite{MR389621} determined the asymptotic cardinality of $\mathcal{M}_{k,\ell}$. (In fact, Bender found a much more general result, but we only state the result that we need here.)

\begin{theorem}[\cite{MR389621}]\label{thm_bender}
For positive integers $k,\ell$ 
\[
    \lim_{\ell \rightarrow \infty} \cardinality{\mathcal{M}_{k,\ell}} = \frac{(k\ell)!}{(k!)^{2\ell}}e^{-\frac{(k-1)^2}{2}}. \qed
\]
  \end{theorem}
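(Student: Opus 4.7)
The plan is to establish this asymptotic via the \emph{bipartite configuration model}, a standard device in the asymptotic enumeration of regular graphs. Place $\ell$ vertices on each side of a bipartition and attach $k$ labelled half-edges to each vertex; a \emph{configuration} is a perfect matching of the $k\ell$ left half-edges to the $k\ell$ right half-edges, giving $(k\ell)!$ configurations in total. Each configuration projects to a bipartite multigraph, and a short counting argument---permuting the $k$ half-edges at each of the $2\ell$ vertices---shows that every simple bipartite $k$-regular graph is the image of exactly $(k!)^{2\ell}$ configurations. Since $\mathcal{M}_{k,\ell}$ is in bijection with such simple graphs on $\ell+\ell$ labelled vertices, this yields
\[
\cardinality{\mathcal{M}_{k,\ell}} = \frac{(k\ell)!}{(k!)^{2\ell}} \cdot \Pr\bigl[\text{a uniformly random configuration is simple}\bigr],
\]
so it suffices to show the probability on the right tends to $\exp(-(k-1)^{2}/2)$.

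Next I would compute the expected number of repeated edges. Let $D$ denote the number of unordered pairs of parallel edges in a uniformly random configuration. Summing over a left vertex ($\ell$ choices), a right vertex ($\ell$ choices), an unordered pair of half-edges at each ($\binom{k}{2}^{2}$ choices), and the two ways to pair them up, and using that any specified pairing of two left half-edges to two right half-edges appears in the configuration with probability $\frac{1}{k\ell(k\ell-1)}$, we obtain
\[
E[D] = \ell^{2} \binom{k}{2}^{2} \cdot \frac{2}{k\ell(k\ell-1)} \;\longrightarrow\; \frac{(k-1)^{2}}{2} \quad \text{as } \ell \to \infty.
\]
A configuration is simple exactly when $D=0$, so it remains to show that $D$ converges in distribution to a Poisson random variable with this mean.

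I would prove Poisson convergence by the method of factorial moments: for each fixed $r\geq 1$,
\[
E\bigl[D(D-1)\cdots(D-r+1)\bigr] \longrightarrow \left(\frac{(k-1)^{2}}{2}\right)^{\!r}.
\]
The main contribution comes from ordered $r$-tuples of parallel-edge pairs whose underlying $4r$ half-edges are all distinct: there are $\sim \ell^{2r}\binom{k}{2}^{2r} 2^{r}$ such tuples, each with survival probability $\sim (k\ell)^{-2r}$, and their combined contribution tends precisely to $\left(\frac{(k-1)^{2}}{2}\right)^{r}$. The main technical obstacle is bookkeeping for degenerate tuples in which some half-edges, vertices, or whole parallel-edge pairs coincide; a careful estimate shows these contribute $O(\ell^{-1})$ relative to the main term, because each coincidence removes a factor of $\ell$ from the tuple count faster than it relaxes the probability constraint. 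Once the Poisson limit is in hand, $\Pr[D=0] \to e^{-(k-1)^{2}/2}$ follows immediately, completing the proof. An alternative route would be an explicit switching argument in the spirit of McKay, directly comparing the number of simple configurations to the total by deleting double edges one at a time; this avoids probabilistic machinery but requires a similarly careful control of switching ratios.
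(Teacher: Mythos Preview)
The paper does not prove this theorem at all: it is quoted from Bender's 1974 paper and marked with \qed\ in the statement itself, signalling that no argument is supplied. So there is no ``paper's own proof'' to match.

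Your configuration-model sketch is a correct and by-now standard route to the result, but it is not Bender's route. Bender's original argument proceeds by an analytic/inclusion--exclusion expansion of the count of $\{0,1\}$-matrices with prescribed line sums, extracting the leading term and showing the correction factors combine to $e^{-(k-1)^{2}/2}$; the probabilistic configuration-model framing you use was introduced later (Bollob\'as, with refinements by McKay and Wormald). The two approaches have complementary strengths: Bender's method covers more general entry bounds and denser regimes in one stroke, while the configuration model gives the transparent probabilistic picture you exploit and extends naturally to non-uniform sparse degree sequences. Your factorial-moment calculation and the treatment of degenerate tuples are exactly the right ingredients; the only caveat worth recording explicitly is that higher-multiplicity edges (triple edges, etc.) are automatically excluded by $D=0$ since each contributes at least $\binom{3}{2}$ to $D$, so no separate argument is needed for them.
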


To get a lower bound on $d_{k, \ell}$, we fix a partition $P$ in $\mathcal{U}_{k, \ell}$, then for each $M \in \mathcal{M}_{k,\ell}$, we will count the number of $Q$ so that the meet table of $P$ and $Q$ is $M$, then we use Theorem~\ref{thm_bender} to bound the size of $\mathcal{M}_{k,\ell}$.

\begin{lemma}\label{degree_lemma}
For positive integers $k, \ell$ with $k\leq \ell$, 
  \[
    d_{k,\ell} = \frac{ k!^\ell}{\ell!} \cardinality{\mathcal{M}_{k,\ell} }.
  \]
\end{lemma}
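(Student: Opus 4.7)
The plan is to count in two ways the number of ordered pairs $(Q,\sigma)$, where $Q \in \mathcal{U}_{k,\ell}$ is adjacent to a fixed partition $P \in \mathcal{U}_{k,\ell}$ in the graph $X_{k,\ell}$ and $\sigma$ is a linear ordering of the blocks of $Q$ as $(Q_1,\dots,Q_\ell)$. First, fix once and for all an ordering $(P_1,\dots,P_\ell)$ of the blocks of $P$. Since the blocks of any $Q \in \mathcal{U}_{k,\ell}$ are pairwise disjoint (hence distinct), each $Q$ adjacent to $P$ admits exactly $\ell!$ orderings, so the number of such pairs $(Q,\sigma)$ is $d_{k,\ell}\cdot \ell!$.

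Second, I would group these pairs by the associated meet table. For an ordered pair $(Q,\sigma)$, define $M(Q,\sigma)$ to be the $\ell\times\ell$ matrix with $(i,j)$-entry $|P_i\cap Q_j|$. By the definition of adjacency in $X_{k,\ell}$, every entry is $0$ or $1$; the row and column sums are $k$ because $|P_i| = |Q_j| = k$. Hence $M(Q,\sigma)\in \mathcal{M}_{k,\ell}$. It therefore suffices to show that for every fixed $M\in\mathcal{M}_{k,\ell}$, the number of ordered pairs $(Q,\sigma)$ with $M(Q,\sigma)=M$ equals $(k!)^\ell$.

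To establish this, I would argue that specifying $(Q_1,\dots,Q_\ell)$ with meet table $M$ is the same as specifying, for each row $i$, a bijection between the $k$ elements of $P_i$ and the $k$ columns $\{j : M_{ij}=1\}$ (the column where element $x\in P_i$ is sent is the unique $j$ such that $x\in Q_j$). In one direction, such a collection of bijections, taken across all rows $i=1,\dots,\ell$, uniquely determines $Q_j = \bigcup_{i:M_{ij}=1}\{\text{element of } P_i \text{ assigned to } j\}$; one checks that each $|Q_j|=k$ (column sum), the $Q_j$ are disjoint and partition $\{1,\dots,k\ell\}$ (each element of each $P_i$ is assigned to exactly one column), and $|P_i\cap Q_j|=M_{ij}$. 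In the opposite direction, any ordered $(Q_1,\dots,Q_\ell)$ realizing $M$ clearly induces such bijections. Since the $\ell$ row choices are independent and each contributes $k!$ possibilities, the total is $(k!)^\ell$, as claimed.

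Combining the two counts gives $d_{k,\ell}\cdot \ell! = |\mathcal{M}_{k,\ell}|\cdot (k!)^\ell$, which rearranges to the desired identity. The hypothesis $k\leq\ell$ is used implicitly: a $0/1$ matrix with row sums $k$ has $k\leq \ell$, so otherwise $\mathcal{M}_{k,\ell}$ is empty and the identity trivializes. There is no real obstacle here; the only care required is to verify the bijection between pairs $(Q,\sigma)$ with meet table $M$ and tuples of row-wise bijections, which is a direct bookkeeping check.
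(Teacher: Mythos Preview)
Your proposal is correct and follows essentially the same double-counting argument as the paper: both count, for a fixed ordered $P$, the ordered partitions $(Q_1,\dots,Q_\ell)$ adjacent to $P$, once as $d_{k,\ell}\cdot\ell!$ and once as $(k!)^\ell$ per meet table. Your direct formulation via pairs $(Q,\sigma)$ and the row-wise bijection argument is, if anything, slightly cleaner than the paper's bipartite-multigraph phrasing, but the content is the same.
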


\begin{proof}
Fix a partition $P \in \mathcal{U}_{k, \ell}$. Define a bipartite multigraph with the vertices in one part the set 
$\mathcal{M}_{k,\ell}$, and the vertices in the other part the neighbourhood of $P$ in $X_{k,\ell}$. 
Two vertices $M$ and $Q$ are adjacent if  the meet table of $P$ and $Q$ is $M$.
By counting the number of edges in this graph in two ways, we will determine the size of the neighbourhood of $P$ in terms of $\cardinality{\mathcal{M}_{k,\ell} }$.

For any $M \in \mathcal{M}_{k,\ell}$, with $M = [m_{i,j}]$ assume that row $i$ correspond to the block $P_i \in P$.
Construct a partition $Q = \{Q_1,Q_2,\dots,Q_\ell\}$ so that the block $Q_j$ corresponds to column $j$ of $M$ and 
$|P_i \cap Q_j | = m_{i,j}$. 
Since the entries of a row in $M$ are either 0 or 1, and sum to $k$, there are $k!$ ways to select how the elements from $P_i$ will be distributed to the blocks of $Q$.  So for each meet table $M$, there are $k!^{\ell}$ partitions $Q$ that can be constructed this way. It is possible that some of these partitions are equal, once the blocks are reordered, so this is a multigraph.   
  
For every $Q$ in the neighbourhood of $P$, there are $\ell!$ ways to order the blocks of $Q$, once the blocks are ordered the meet table for $P$ and $Q$ is uniquely defined. In the bipartite graph, $Q$ is adjacent to each of these tables in the graph (again, these tables may not be distinct, so the graph is a multigraph). The degree of every vertex $Q$ is $\ell!$. 

Thus we have that the number of edges in the multigraph is
    \[
    \ell! d_{k,\ell} =  \sum_{M \in \mathcal{M}_{k,\ell} } k!^\ell,
  \]
  and the result follows.
\end{proof}

Using Theorem~\ref{thm_bender} we have the asymptotic size of $d_{k,\ell}$. 

\begin{corollary}\label{cor:uoverd}
  For a fixed integer $k$ with $k\geq 2$,
  \[
    \lim_{\ell \rightarrow \infty}  \frac{u_{k,\ell}}{d_{k,\ell}} = e^{\frac{(k-1)^2}{2}}.
  \]
 \end{corollary}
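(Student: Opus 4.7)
The plan is to combine the explicit closed form for $u_{k,\ell}$ given in Equation~(\ref{eq:ukl}), the formula for $d_{k,\ell}$ proved in Lemma~\ref{degree_lemma}, and the asymptotic cardinality of $\mathcal{M}_{k,\ell}$ from Theorem~\ref{thm_bender}. All the ingredients are already on the page, so the argument is essentially algebraic bookkeeping once we line the quantities up.

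First I would rewrite $u_{k,\ell}$ in a form that makes the cancellation with $d_{k,\ell}$ transparent. Telescoping the product of binomial coefficients in Equation~(\ref{eq:ukl}) gives the clean expression
\[
u_{k,\ell} = \frac{(k\ell)!}{\ell!\,(k!)^{\ell}}.
\]
Next I would substitute the formula $d_{k,\ell} = \frac{k!^{\ell}}{\ell!}\,\cardinality{\mathcal{M}_{k,\ell}}$ from Lemma~\ref{degree_lemma} into the ratio, so that the $\ell!$ factors cancel and we obtain
\[
\frac{u_{k,\ell}}{d_{k,\ell}} = \frac{(k\ell)!}{(k!)^{2\ell}\,\cardinality{\mathcal{M}_{k,\ell}}}.
\]

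Finally, I would apply Theorem~\ref{thm_bender}, which (interpreted in the standard asymptotic sense) says $\cardinality{\mathcal{M}_{k,\ell}} \sim \frac{(k\ell)!}{(k!)^{2\ell}}\,e^{-(k-1)^2/2}$ as $\ell \to \infty$. Plugging this in, the factor $(k\ell)!/(k!)^{2\ell}$ cancels exactly with the reciprocal in our expression, leaving
\[
\lim_{\ell \to \infty}\frac{u_{k,\ell}}{d_{k,\ell}} = e^{(k-1)^2/2},
\]
as claimed.

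There is no real obstacle here: the whole proof is a one-line substitution once $u_{k,\ell}$ is rewritten as $(k\ell)!/(\ell!(k!)^\ell)$. The only thing to be careful about is making sure Bender's formula is used as an asymptotic equivalence (the two sides agree in the limit of the ratio) rather than as a literal equality of unbounded quantities, and that the hypothesis $k \leq \ell$ needed for Lemma~\ref{degree_lemma} is harmless when $\ell \to \infty$ with $k$ fixed.
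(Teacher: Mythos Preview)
Your proposal is correct and follows essentially the same route as the paper: both combine the closed form $u_{k,\ell} = (k\ell)!/(\ell!\,(k!)^{\ell})$ from Equation~(\ref{eq:ukl}) with Lemma~\ref{degree_lemma} and Theorem~\ref{thm_bender} to obtain the limit. Your version is in fact more carefully written, making explicit the cancellation and the asymptotic interpretation of Bender's formula that the paper leaves implicit.
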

 
 \begin{proof}
 This follows from the value of $u_{k,\ell}$ given in Equation~(\ref{eq:ukl}) and from the fact that
\[
\lim_{\ell \rightarrow \infty}  d_{k,\ell} = \frac{ (k\ell)! }{ (k!)^{\ell}\ell! } e^{\frac{-(k-1)^2}{2}}.
\]
\end{proof}

Thus for every $\epsilon > 0$, there exists an $\ell'$ such that for all $\ell \geq \ell'$,
\[
  \frac{u}{d} \leq e^{\frac{(k-1)^2}{2}} + \epsilon.
  \]


\section{A bound on the multiplicity of eigenvalues with large absolute value}\label{sec:multiplicity}

In Section~\ref{sec:evals} we found three eigenvalues, $d$, $\tau$, and $\theta$ of $X_{k,\ell}$. The ratio $\frac{d}{\tau} = \frac{k(1-\ell)}{k-1}$, so 
\[
\frac{  | V(X_{k,\ell}) | }{ 1- \frac{d}{\tau}} = 
\frac{  | V(X_{k,\ell}) | }{ 1 - \frac{k(1-\ell)}{k-1} } 
= u_{k, \ell-1}.
\]
This is exactly the size of a set of canonically $2$-intersecting $(k,\ell)$-partitions.
If we can show that $\tau$ is the least eigenvalue of $X_{k,\ell}$, then the ratio bound implies that these are cocliques of maximum size. In this section we show if $X_{k,\ell}$ has an eigenvalue $\lambda$ with $\lambda^{2}>\tau^{2}$, then there is a bound on the multiplicity of $\lambda$.

Let 
\[
\{d^{(1)}, \tau^{(m_{\tau})}, \theta^{(m_{\theta})}, \lambda_{2}^{(m_{2})},\hdots, \lambda_{j}^{(m_{j})}  \}
\]
be the spectrum of the matrix $X_{k,\ell}$, where the values $m_{i}$ represent the multiplicities of the eigenvalues. 
By squaring $A$ and taking the trace, we have
\begin{equation*}
vd = d^{2}+m_{\tau}\tau^{2}+m_{\theta}\theta^{2}+\sum_{i=2}^{j} m_{i}\lambda_{i}^{2}.
\end{equation*}
Hence for every $2\leq i \leq j$ we have
\begin{equation*}
vd-d^{2}-m_{\tau}\tau^{2}-m_{\theta}\theta^{2}\geq m_{i}\lambda_{i}^{2}.
\end{equation*}

Assume $\lambda_i$ is an eigenvalue of $X_{k,\ell}$ with $\lambda_i^{2}>\tau^{2}$, and also that $\lambda_i$ is not the eigenvalue belonging to the  $[k\ell]$, $[k\ell-2,2]$ or $[k\ell-3,3]$ modules, 
then 
\[
\frac{vd-d^{2}-m_{\tau}\tau^{2}-m_{\theta}\theta^{2}}{\tau^{2}}\geq m_{i} 
\]
Expanding $\theta$ using Equation~(\ref{eq:thetafinal}) in the above equation produces the following equation
\begin{equation*}
\left( \frac{v}{d}-1 \right) \frac{k^{2}(\ell-1)^2}{(k-1)^2}-m_{\theta}\frac{4(k-2)^{2}}{k^{2}(\ell-2)^{2}}-m_{\tau}\geq m_{i}.
\end{equation*}
Further, by Lemmas~\ref{lemma:multTau} and \ref{lemma:multTheta}, it is known that $m_{\tau}\geq \binom{k\ell}{2}-\binom{k\ell}{1}$ and $m_{\theta}\geq \binom{k\ell}{3}-\binom{k\ell}{2}$, so this bound becomes 
\begin{equation*}
\left( \frac{v}{d}-1 \right)
\frac{k^{2}(\ell-1)^2}{(k-1)^2}-\frac{(k\ell)(k\ell-1)(k\ell-5)}{6}\frac{4(k-2)^{2}}{k^{2}(\ell-2)^{2}}-\frac{(k\ell)(k\ell-3)}{2}\geq m_{i}.
\end{equation*}

Our next step is to show that this upper bound on $m_{i}$ is smaller than $\binom{k\ell}{3}-\binom{k\ell}{2}$. This will be a contradiction since have assumed that $\lambda$ does not belong to any of the $[k\ell]$, $[k\ell-2,2]$, and $[k\ell-3,3]$ modules. In other words, we need to prove that
\begin{equation}\label{eqn_needed}
\frac{v}{d}-1 <\frac{\ell(k-1)^{2}}{6k^{3}(\ell-1)^{2}(\ell-2)^{2}} \left( k^{2}(\ell-2)^{2}(k\ell-4)(k\ell+1)+4(k-2)^{2}(k\ell-1)(k\ell-5) \right).
\end{equation}
This will follow from Corollary~\ref{cor:uoverd}.
 
\begin{theorem}\label{thm:main}
      Fix an integer $k \geq 3$. For $\ell$ sufficiently large, the largest set of partially 2-intersecting uniform $(k,\ell)$-partitions has size
      \[
        \binom{k\ell-2}{k-2} u_{k,\ell-1}
      \]
\end{theorem}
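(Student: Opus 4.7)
The plan is to apply the Delsarte-Hoffman ratio bound (Theorem~\ref{Thm:RatioBound}) to the graph $X_{k,\ell}$, in which cocliques correspond precisely to partially $2$-intersecting families of $(k,\ell)$-partitions. A direct computation, already recorded in Section~\ref{sec:evals}, shows that if $\tau=-\frac{(k-1)d}{k(\ell-1)}$ is the least eigenvalue of $X_{k,\ell}$, then the ratio bound evaluates to exactly $u_{k,\ell-1}$, which is the size $|S_{i,j}|=\binom{k\ell-2}{k-2}u_{k,\ell-1}$ of a canonically $2$-intersecting family. Thus the entire theorem reduces to the single spectral statement: for all $\ell$ sufficiently large, $\tau$ is the least eigenvalue of $X_{k,\ell}$.

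To establish this, I would argue by contradiction. Suppose $\lambda$ is an eigenvalue of $X_{k,\ell}$ with $\lambda^{2}>\tau^{2}$. By Lemmas~\ref{lemma:multTau} and~\ref{lemma:multTheta} the eigenvalues $\tau$ and $\theta$ belong to the $[k\ell-2,2]$- and $[k\ell-3,3]$-modules respectively, and $d$ belongs to the trivial $[k\ell]$-module. Since the eigenspaces are unions of irreducible $\sym{k\ell}$-submodules of $\indg{1_{\sym{k}\wr\sym{\ell}}}{\sym{k\ell}}$, the eigenvalue $\lambda$ must belong to some module not among $[k\ell]$, $[k\ell-2,2]$, $[k\ell-3,3]$. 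By Theorem~\ref{thm:RepWeWant}, for $k\ell\geq 13$ every other irreducible constituent has dimension strictly greater than $\binom{k\ell}{3}-\binom{k\ell}{2}$, so the multiplicity $m_\lambda$ of $\lambda$ is at least this much.

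On the other hand, computing $\operatorname{tr}(A(X_{k,\ell})^2)=vd$ and isolating the contribution of $\lambda$, I obtain the inequality
\begin{equation*}
m_\lambda \leq \left(\frac{v}{d}-1\right)\frac{k^{2}(\ell-1)^{2}}{(k-1)^{2}}-m_\theta\frac{4(k-2)^{2}}{k^{2}(\ell-2)^{2}}-m_\tau,
\end{equation*}
and substituting the known lower bounds $m_\tau\geq\binom{k\ell}{2}-\binom{k\ell}{1}$ and $m_\theta\geq\binom{k\ell}{3}-\binom{k\ell}{2}$ turns this into the concrete bound displayed just before the theorem. Combining the two bounds on $m_\lambda$ and simplifying reduces the contradiction to Inequality~\eqref{eqn_needed}, which involves only $v/d$ and polynomial expressions in $k$ and $\ell$.

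The final step is to feed in the asymptotic estimate from Corollary~\ref{cor:uoverd}, which gives $u_{k,\ell}/d_{k,\ell}\to e^{(k-1)^{2}/2}$ as $\ell\to\infty$ for fixed $k$. Since the right-hand side of~\eqref{eqn_needed} grows like $\Theta(\ell^{2})$ in $\ell$ while the left-hand side $v/d-1$ is bounded by a constant depending only on $k$, the inequality holds for all sufficiently large $\ell$, producing the desired contradiction. This forces $\tau$ to be the least eigenvalue and, via the ratio bound, proves the theorem. The main obstacle in this outline is the asymptotic comparison in~\eqref{eqn_needed}: one must be careful that the constants hidden in $e^{(k-1)^{2}/2}+\varepsilon$ do not interact badly with the $k$-dependence on the right-hand side, but because the dominant term on the right is $k^{4}\ell^{4}/6$ divided by only $6k^{3}(\ell-1)^{2}(\ell-2)^{2}/(\ell(k-1)^{2})^{-1}$, growth in $\ell$ clearly dominates for each fixed $k$, and the proof concludes.
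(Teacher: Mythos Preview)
Your proposal is correct and follows the paper's own proof essentially verbatim: reduce to showing $\tau$ is the least eigenvalue, bound the multiplicity of a hypothetical eigenvalue with $\lambda^2>\tau^2$ via $\operatorname{tr}(A^2)=vd$, and contrast this with Theorem~\ref{thm:RepWeWant} using Corollary~\ref{cor:uoverd} to verify inequality~\eqref{eqn_needed} for large $\ell$. One small correction: the right-hand side of~\eqref{eqn_needed} grows like $\Theta(\ell)$ (with leading term $\frac{k(k-1)^2}{6}\ell$), not $\Theta(\ell^2)$ as you state, but this does not affect the argument since a left-hand side bounded by a constant in $\ell$ is still eventually dominated.
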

\begin{proof}
For any distinct $i, j \in \{1,\dots, k\ell\}$, the set $S_{i, j}$ of all $(k,\ell)$-partitions with $i$ and $j$ are in the same block form a set of partially 2-intersecting $(k,\ell)$-partitions of the size given in the theorem.

Corollary~\ref{cor:uoverd} shows that $\frac{v}{d}$ approaches a fixed constant, namely $e^{\frac{(k-1)^2}{2}}$, as $\ell$ goes to infinity. Since the right hand side of Equation~(\ref{eqn_needed}) grows linearly in $\ell$, we have that 
Equation~(\ref{eqn_needed}) holds for $\ell$ sufficiently large. This implies if there is an eigenvalue $\lambda$ 
of $X_{k,\ell}$ with $\lambda \leq \tau$, 
 then the multiplicity of $\lambda$ is less than or equal to $\binom{k\ell}{3}-\binom{k\ell}{2}$.
 
By Theorem~\ref{thm:RepWeWant}, eigenspaces with dimension less than or equal to 
$\binom{k\ell}{3}-\binom{k\ell}{2}$ can only include the $[k\ell]$, $[k\ell-2,2]$ or the $[k\ell-3,3]$-modules. 
The degree, $d$, is the eigenvalue belonging to the $[k\ell]$-module, and Lemma~\ref{lemma:multTau} and 
Lemma~\ref{lemma:multTheta} give the eigenvalues belonging to the $[k\ell-2,2]$ or the $[k\ell-3,3]$-modules. 
So we can conclude that $\tau = -\frac{(k-1)d}{k(\ell-1)}$ is the least eigenvalue of $X_{k,\ell}$ and that $\tau$ 
belongs only to the $[k\ell-2,2]$-module. 

By the ratio bound, Theorem~\ref{Thm:RatioBound}, the maximum size of coclique in $X_{k,\ell}$ is 
\[
\frac{|V( X_{k,\ell} )|}{1- \frac{d}{\tau}} 
= \frac{ v }{1 - \frac{d}{ -\frac{(k-1)d}{k(\ell-1)} }}
= \frac{ v }{1 + \frac{k(\ell-1)}{ k-1} }
=  \frac{v  (k-1)  }{ k\ell - 1}
= \binom{k\ell-2}{k-2} u_{k,\ell-1}.
\]
\end{proof}

The previous result shows that the sets $S_{i,j}$ are the largest intersecting sets. We further conjecture
 that these sets are the only maximum intersecting sets.

\begin{conj}
For $k\geq 3$ and $\ell$ sufficiently large, the only sets of partially 2-intersecting $(k,\ell)$-partitions with size
$\binom{k\ell-2}{k-2} u_{k,\ell-1}$ are the sets $S_{i, j}$.
\end{conj}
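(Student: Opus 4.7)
The plan is to leverage the equality condition in the ratio bound together with the representation-theoretic decomposition established for Theorem~\ref{thm:main}. Let $S \subseteq \mathcal{U}_{k,\ell}$ be any partially $2$-intersecting set with $|S| = \binom{k\ell-2}{k-2}u_{k,\ell-1}$. By the equality clause of Theorem~\ref{Thm:RatioBound}, $\nu_S - \frac{|S|}{v}\mathbf{1}$ is a $\tau$-eigenvector of $X_{k,\ell}$, and the proof of Theorem~\ref{thm:main} shows that $\tau$ belongs only to the unique copy $M$ of the $[k\ell-2,2]$-module inside $\indg{1_{\sym{k}\wr\sym{\ell}}}{\sym{k\ell}}$. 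Hence $\nu_S$ lies in the subspace $W := \spanof(\mathbf{1}) \oplus M$, as does each canonical vector $\nu_{S_{i,j}}$.

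I would next identify $W$ as the image of the $\sym{k\ell}$-equivariant map
\[
\phi \colon \mathbb{C}^{\binom{[k\ell]}{2}} \to \mathbb{C}^{\mathcal{U}_{k,\ell}}, \qquad \phi(e_{\{i,j\}}) = \nu_{S_{i,j}}.
\]
The domain decomposes as $\chi_{[k\ell]} \oplus \chi_{[k\ell-1,1]} \oplus \chi_{[k\ell-2,2]}$; by Theorem~\ref{thm:RepWeWant} the codomain contains no $[k\ell-1,1]$-constituent, so Schur's lemma forces $\ker\phi$ to equal the $[k\ell-1,1]$-component, namely $\{c : c_{\{i,j\}} = g(i)+g(j),\ \sum_i g(i) = 0\}$. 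Since $\phi(e_{\{1,2\}}) = \nu_{S_{1,2}}$ is not a multiple of $\mathbf{1}$, $\phi$ is nonzero on $\chi_{[k\ell-2,2]}$, and the image is exactly $W$.

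Write $\nu_S = \phi(c)$ for some $c : \binom{[k\ell]}{2} \to \mathbb{R}$, unique modulo $\ker\phi$. A direct calculation gives
\[
\nu_S(P) = \sum_{B \in P} \sum_{\{i,j\}\subset B} c_{\{i,j\}},
\]
and summing over $P$, using that each $k$-subset is a block in exactly $u_{k,\ell-1}$ partitions, yields $\sum_{\{i,j\}} c_{\{i,j\}} = 1$. The remaining task is to show that the pointwise constraint $\nu_S(P) \in \{0,1\}$ forces $c$, modulo $\ker\phi$, to be the indicator of a single pair $\{i_0,j_0\}$; then $\nu_S = \nu_{S_{i_0,j_0}}$ and $S = S_{i_0,j_0}$ follow.

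This last collapse is the main obstacle. My first attempt would fix a canonical representative of $c$ by requiring the ``degree'' $\pi(c)(i) := \sum_{j\neq i} c_{\{i,j\}}$ to be constant in $i$ (this selects the orthogonal complement of $\ker\phi$ inside the $[k\ell]\oplus[k\ell-2,2]$-subspace of $\mathbb{C}^{\binom{[k\ell]}{2}}$), and then exploit the Hadamard identity $\nu_S \circ \nu_S = \nu_S$: pushing this quadratic relation through $\phi$ constrains $c$ to a finite set whose members, after normalization, ought to coincide with the canonical indicators. A more hands-on alternative is to compare $\nu_S(P) - \nu_S(P')$ for partitions differing by a single element-swap between two blocks; each difference is a short integer combination of the $c_{\{i,j\}}$ that must lie in $\{-1,0,1\}$, and an induction on the support of $c$, using the spectral gap between $\tau$ and the next-smallest eigenvalue (valid for $\ell$ sufficiently large), should shrink the support to a single pair and force the nonzero coefficient to $1$.
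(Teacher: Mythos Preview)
The statement you are attempting to prove is stated in the paper as a \emph{conjecture}, not a theorem; the paper does not prove it. What the paper does prove is exactly the first half of your proposal: that any maximum coclique $S$ has its characteristic vector in $W = \spanof(\mathbf{1}) \oplus M$, i.e.\ in the span of the canonical vectors $\nu_{S_{i,j}}$. Your argument for this part (equality in the ratio bound forces $\nu_S - \frac{|S|}{v}\mathbf{1}$ into the $\tau$-eigenspace, which is the unique $[k\ell-2,2]$-module) matches the paper's Corollary following Theorem~\ref{thm:main}. Your identification of $W$ via the equivariant map $\phi$ and Schur's lemma is a clean repackaging of that same argument.

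The genuine gap is the step you yourself flag as ``the main obstacle'': deducing from $\nu_S \in W$ and $\nu_S \in \{0,1\}^{\mathcal{U}_{k,\ell}}$ that $c$ is, modulo $\ker\phi$, the indicator of a single pair. Neither of your two sketches closes this. The Hadamard-product idea $\nu_S \circ \nu_S = \nu_S$ is a quadratic constraint in the ambient space $\mathbb{C}^{\mathcal{U}_{k,\ell}}$, but $\phi$ is only linear, so there is no obvious way to ``push it through $\phi$'' to a tractable quadratic system in the $c_{\{i,j\}}$; the phrase ``ought to coincide'' is where an actual argument is needed. The swap argument gives that certain short signed sums of $c_{\{i,j\}}$ lie in $\{-1,0,1\}$, but you have not explained how the spectral gap between $\tau$ and the other eigenvalues enters, nor how these local constraints propagate to force a single-pair support; ``should shrink'' is again a hope rather than a proof. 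In short, up to the point where the paper stops, your proposal is correct and essentially identical to the paper's approach; beyond that point, the conjecture remains open and your sketches do not resolve it.
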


We can make a step towards this conjecture with the following weaker characterization of the maximum intersecting sets.
Denote the characteristic vectors of the sets $S_{i,j}$ by $v_{i,j}$.

\begin{cor}
For a fixed integer $k \geq 3$ and $\ell$ sufficiently large, let $S$ be any maximum partially 2-intersecting set of $(k,\ell)$-partitions. Then the characteristic vector of $S$ is a linear combination of the vectors $v_{i,j}$.
\end{cor}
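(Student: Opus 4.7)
The plan is to apply the equality clause of the Delsarte-Hoffman bound and combine it with the representation-theoretic data built up in Sections~\ref{sec:representations}--\ref{sec:evals}. Since Theorem~\ref{thm:main} proves that $|S|$ attains the ratio bound, Theorem~\ref{Thm:RatioBound} yields that $\nu_S - \frac{|S|}{v}\mathbf{1}$ is a $\tau$-eigenvector of $X_{k,\ell}$. The proof of Theorem~\ref{thm:main} already shows that, for $\ell$ sufficiently large, $\tau$ belongs only to the $[k\ell-2,2]$-module, and Theorem~\ref{thm:RepWeWant} says this module occurs with multiplicity one in $\indg{1_{\sym{k}\wr\sym{\ell}}}{\sym{k\ell}}$. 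Writing $M$ for the unique $[k\ell-2,2]$-isotypic component of $\cC[\mathcal{U}_{k,\ell}]$, this first step forces
\[
\nu_S \;\in\; \cC\mathbf{1} \oplus M.
\]

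It then suffices to show that $\spanof\{v_{i,j}\} = \cC\mathbf{1}\oplus M$. For this I will introduce the $\sym{k\ell}$-equivariant linear map
\[
\phi:\cC\bigl[\tbinom{[k\ell]}{2}\bigr]\longrightarrow \cC[\mathcal{U}_{k,\ell}], \qquad e_{\{i,j\}}\mapsto v_{i,j},
\]
whose image is exactly this span. The domain has the standard decomposition $\chi_{[k\ell]}\oplus \chi_{[k\ell-1,1]}\oplus \chi_{[k\ell-2,2]}$, each summand appearing with multiplicity one. By Theorem~\ref{thm:RepWeWant}, $\chi_{[k\ell-1,1]}$ is not a constituent of the codomain, so Schur's lemma forces $\phi$ to annihilate that summand, and the image is automatically contained in $\cC\mathbf{1}\oplus M$.

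To get equality it is enough to check that $\phi$ is nonzero on each of the two remaining irreducible summands. For $\chi_{[k\ell]}$ a direct count gives $\sum_{i<j}v_{i,j}=\ell\binom{k}{2}\mathbf{1}$, so the trivial line survives. For $\chi_{[k\ell-2,2]}$, I will consider the test vector $e_{\{1,2\}}-e_{\{3,4\}}$: it is orthogonal to the trivial line, and its image $v_{1,2}-v_{3,4}$ is nonzero because, for $\ell\geq 2$, there exist partitions containing $\{1,2\}$ in a block but not $\{3,4\}$. Since $\phi$ already vanishes on $\chi_{[k\ell-1,1]}$, this nonzero image can only arise from the $\chi_{[k\ell-2,2]}$ component of the test vector, so by Schur's lemma $\phi$ restricts to an isomorphism from $\chi_{[k\ell-2,2]}$ onto $M$. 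Hence $\spanof\{v_{i,j}\}=\cC\mathbf{1}\oplus M \ni \nu_S$, which is the required conclusion.

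The only delicate point is the clean identification of the $\tau$-eigenspace with the irreducible module $M$ rather than some strictly larger eigenspace; this is handled by the multiplicity-one clause in Theorem~\ref{thm:RepWeWant} together with the bound on eigenvalue multiplicities from the proof of Theorem~\ref{thm:main}, which rules out any additional eigenvalue equal to $\tau$ once $\ell$ is large. With that identification in hand, the remainder of the argument reduces to a Schur's-lemma bookkeeping and the evaluation of the single pair $v_{1,2}-v_{3,4}$.
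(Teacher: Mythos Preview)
Your proof is correct and follows essentially the same approach as the paper: both use the equality clause of the ratio bound to place $\nu_S$ in the span of $\mathbf{1}$ and the unique $[k\ell-2,2]$-module, and then show that the vectors $v_{i,j}$ span this same subspace using the $\sym{k\ell}$-action together with irreducibility. The only cosmetic difference is that you carry out the second step via Schur's lemma applied to the equivariant map from $\cC\bigl[\binom{[k\ell]}{2}\bigr]$, whereas the paper instead applies the ratio-bound equality clause to each canonical coclique $S_{i,j}$ to see directly that $v_{i,j}-\frac{k-1}{k\ell-1}\mathbf{1}$ is a $\tau$-eigenvector and then invokes irreducibility of the $[k\ell-2,2]$-module.
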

\begin{proof}
For $k\geq 3$ and $\ell$ sufficiently large, $S_{i,j}$ is a maximum coclique in $X_{k,\ell }$ and equality holds in the ratio bound. Let $v_{i,j}$ be the characteristic vector of $S_{i,j}$. Since we have equality in the ratio bound, this implies that 
\[
v_{i,j}  - \frac{k-1}{k\ell-1} \one
\]
is a $\tau$-eigenvector. Since no other modules have have eigenvalue $\tau$, these vectors are in the $[k\ell-2,2]$-module. Further, the set of vectors 
\[
\left \{ v_{i,j}  - \frac{k-1}{k\ell-1} \one \, | \, i, j \in \{1,\dots, k\ell\}  \right \}
\]
is invariant under the action of $\sym{k\ell}$, so they form a module. Since the $[k\ell-2,2]$-module is irreducible, these vectors span the entire $[k\ell-2,2]$-module; this also implies that the vectors 
$\{ v_{i,j}  \, | \, i, j \in \{1,\dots, k\ell\} \}$  span the $[k\ell]$ and $[k\ell-2,2]$-modules.

Let $S$ be a partially 2-intersecting set of $(k,\ell)$-partition of maximum size, and let $v_S$ denote the characteristic vector of $S$. Then $v_S - \frac{k-1}{k\ell-1} \one$ is in the $[k\ell-2,2]$-module. Thus $v_S$ is in the span of the $[k\ell]$ and $[k\ell-2,2]$-module, so $v_S$ is a linear combination of the $v_{i,j}$. 
\end{proof}
 
\section{Exact result for $k=3$}
\label{sec:exact3}

Corollary 7.5.6 in~\cite{MR2708226} proves Theorem~\ref{thm:main} holds for $k=3$ and $\ell$ odd. In this section
 we will prove that the theorem actually holds for all $\ell \geq 3$ with $k=3$. For $k=3$, we observed experimentally 
 that the ratio $u_{3,\ell}/d_{3,\ell}$ converges to $e^{\frac{(k-1)^2}{2}} = e^2$ surprisingly quickly. If the sequence of $u_{3,\ell}/d_{3,\ell}$ was non-increasing this would be sufficient, but we have no proof of this. Rather, in this section we show an upper bound on the ratio $u_{3,\ell}/d_{3,\ell}$ for all $\ell$, or, equivalently, a lower bound on $d_{3,\ell}$. This bound holds for $\ell >10$, and we simply directly check the theorem for the specific graphs with smaller values of $\ell$.

\begin{lemma}\label{k_3_lemma}
  For $\ell > 10$, the degree, $d_{3,\ell}$ is greater than  $u_{3,\ell}/24$.
\end{lemma}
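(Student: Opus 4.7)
The plan is to convert the desired inequality into an explicit lower bound on $|\mathcal{M}_{3,\ell}|$ and exploit the fact that the limit constant $e^{-(k-1)^2/2} = e^{-2} \approx 0.135$ from Corollary~\ref{cor:uoverd} is much larger than $1/24 \approx 0.042$. Specifically, using Lemma~\ref{degree_lemma} together with Equation~(\ref{eq:ukl}) for $k=3$ gives
\[
\frac{u_{3,\ell}}{d_{3,\ell}} \;=\; \frac{(3\ell)!}{36^\ell\,|\mathcal{M}_{3,\ell}|},
\]
so the claim $d_{3,\ell} > u_{3,\ell}/24$ is equivalent to the explicit lower bound $|\mathcal{M}_{3,\ell}| > (3\ell)!/(24\cdot 36^\ell)$.

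To estimate $|\mathcal{M}_{3,\ell}|$, I would use the bipartite configuration model: attach three labelled stubs to each of the $\ell$ left vertices and $\ell$ right vertices, and sample one of the $(3\ell)!$ perfect pairings of left-stubs to right-stubs uniformly at random. Each simple graph in $\mathcal{M}_{3,\ell}$ arises from exactly $(3!)^{2\ell}=36^\ell$ such pairings (permutations of the three stubs at each vertex), hence
\[
|\mathcal{M}_{3,\ell}| \;=\; \frac{(3\ell)!}{36^\ell}\cdot \Pr[\text{the pairing is simple}].
\]
The target inequality is therefore equivalent to $\Pr[\text{simple}] > 1/24$ for every $\ell > 10$. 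This is exactly the probability whose limit Bender computed to be $e^{-2}$ in Theorem~\ref{thm_bender}; my task is to make that limit effective down to $\ell = 11$.

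I would do this by inclusion–exclusion over the minimal obstructions to simplicity. Let $E_{i,j,\{s,t\}}$ be the event that stubs $s,t$ at left-vertex $i$ are both matched into right-vertex $j$; $\Pr[\text{simple}]$ equals the probability that none of these events occur. Since conditioning on a collection of such events amounts to specifying some stub-pairs, joint probabilities factor into elementary products of the form $1/(3\ell-1)(3\ell-3)\cdots$, which are rational in $\ell$. Bonferroni bounds then give an inequality of the shape $\Pr[\text{simple}]\geq e^{-2} - C/\ell$ with $C$ an absolute constant readable off the first few terms; choosing the truncation so that this beats $1/24$ at $\ell = 11$ completes the proof.

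The main obstacle is producing an absolute constant $C$ whose implied threshold is at most $10$: Bender's original proof is purely asymptotic, so the tail of the inclusion–exclusion must be controlled uniformly rather than just in the limit. A natural fallback, should the resulting $C$ turn out to be too large, is to bootstrap from an exact value of $|\mathcal{M}_{3,\ell}|$ at one or two base values (feasible by direct computer enumeration at $\ell=11,12$) and a switching-style comparison of $|\mathcal{M}_{3,\ell+1}|$ to $|\mathcal{M}_{3,\ell}|$, inductively preserving the required inequality $36^\ell\cdot 24\cdot|\mathcal{M}_{3,\ell}| > (3\ell)!$ for all $\ell>10$.
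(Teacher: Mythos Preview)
Your plan is essentially the same approach the paper takes: a truncated inclusion--exclusion (Bonferroni) argument to produce an explicit lower bound on the degree that beats $u_{3,\ell}/24$ for all $\ell>10$. The paper works directly on partitions rather than routing through the configuration model: it fixes $P$, lets $\mathcal J$ be the $3\ell$ pairs that lie in a block of $P$, applies inclusion--exclusion over the events $A_{\{x,y\}}=\{Q: \{x,y\}\text{ in a block of }Q\}$, truncates at $j=5$, and sums over the sixteen relevant compositions $(n_0,n_1,n_2,n_3)$ to obtain an explicit rational function of $\ell$ that is below $24$ for $\ell>10$. Your translation through $|\mathcal M_{3,\ell}|$ and the pairing model is equivalent via Lemma~\ref{degree_lemma}; the paper's framing is marginally cleaner because its bad events are indexed by the $3\ell$ pairs rather than the $3\ell^2$ potential multi-edges.

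One point to correct in your write-up: a Bonferroni truncation does \emph{not} give a bound of the shape $\Pr[\text{simple}]\ge e^{-2}-C/\ell$. Truncating at an odd level $2m+1$ gives a rational function whose limit as $\ell\to\infty$ is the partial sum $\sum_{j=0}^{2m+1}(-1)^j 2^j/j!$, not $e^{-2}$ itself. For $m=2$ that partial sum is $1/15\approx 0.067$, which is why the paper's choice $j\le 5$ comfortably clears the $1/24\approx 0.042$ threshold once the $O(1/\ell)$ corrections are controlled. So the actual execution is to fix $m$ large enough that the limiting partial sum exceeds $1/24$ and then compute the finite-$\ell$ bound explicitly---precisely what the paper does. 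Your fallback via exact base values and switching is unnecessary here.
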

\begin{proof}
  We will use a truncated inclusion-exclusion argument to bound the degree. Since $X_{k,\ell}$ is vertex transitive, 
  we again obtain the bound on the degree by counting the neighbours of an arbitrary partition $P \in \mathcal{U}_{3, \ell}$. 
  Let $\mathcal{J}$ be the set of the $3\ell$ pairs $\{x,y\}$ of elements in $\{1,2,\dots,3\ell\}$ that are in the same block of $P$.
  For a pair $\{x, y\} \in \mathcal{J}$, we let $A_{\{x,y\}}$ be the set of all partitions which contain $x$ and $y$ in the same block. 
  For a subset $J \subseteq \mathcal{J}$, define 
  \[
  N(J) = \cardinality{  \cap_{j \in J} A_{j} }
  \]
  and for $0 \leq j \leq 3\ell$ let $N_j = \sum_{J, \cardinality{J} = j} N(J)$. By inclusion-exclusion,
  \begin{equation}\label{eq:altsum}
    d_{3,\ell} = \sum_{j=0}^{3\ell} -1^jN_j.
  \end{equation}
Next we calculate $N_j$. First, we note that $N_0 = N(\emptyset) =  u_{3,\ell}$.

 For any set $J$ and each block of $P$, there are either 0, 1, 2 or 3 pairs in $J$ which are contained in the block. Let $n_i$ be the number of blocks of $P$ that have $i$ of their pairs in $J$.  We call $(n_0,n_1,n_2,n_3)$ the {\em pair distribution of $J$} and note that $n_0+n_1+n_2+n_3 = \ell$. If a block of $P$ has $i$ of its pairs in a $J$, we say the block 
 has \textsl{type $n_i$}.
  
 To find $N_j$, we first fix a $J$ with a given pair distribution and count then number of $(3,\ell)$-partitions
  $Q$ in which every pair from $J$ is contained in some block of $Q$. Next, for a given pair distribution we count the number of sets $J \in \mathcal{J}$ that have the fixed pair distribution. Finally we count all the possible pair distributions.
  
First, fix a $J$ with pair distribution $(n_0,n_1,n_2,n_3)$ and count the number of partitions $Q \in  \cap_{j \in J} A_{j} $.
The blocks of type $n_3$ determine exactly which three elements are in a block of $Q$, as do the blocks of type $n_2$.
Each of the blocks of type $n_1$ contain one pair and determine two of the three points in their respective blocks.  One more point must be chosen to complete each of these and this choice is ordered since each pair of type $n_1$ from $J$ uniquely labels its corresponding block. Each of the blocks of type $n_0$ don't determine any points in $Q$. Thus 
the number of partitions $\mathcal{Q}$ which contain the pairs from $J$ is given by the multinomial coefficient
  \[
    \frac{1}{n_0!} \binom{3\ell-3(n_3+n_2)-2n_1}{1^{n_1}, 3^{n_0}}. 
  \]

We now count the number of possible $J$ which have pair distribution $(n_0,n_1,n_2,n_3)$.  The number of ways to select
the type of each block in $P$ is equal to the multinomial coefficient
  \[
    \binom{\ell}{n_0,n_1,n_2,n_3},
  \]
since we are choosing the four sets of blocks from $\mathcal{P}$ that have either 0, 1, 2 or 3 pairs in $J$. 
Each of the blocks of $P$ with $n_3$ pairs has all of its three pairs in $J$.  For each of the blocks with $n_2$ pairs, there are three ways to choose which two of the three possible pairs are in $J$. For each of the blocks with $n_1$ pairs in $J$, there are three ways to chose which one of the pairs is in $J$.  Finally, each of the $n_0$ blocks do not contribute any pairs to $J$.  Thus there are
  \[
    3^{n_1+n_2}  
  \]
  different sets $J$ once the blocks of $\mathcal{P}$ are assigned to the four sets.

Each pair distribution $(n_0,n_1,n_2,n_3)$ is a composition, that is, an ordered partition of $\ell$ into exactly four non-negative parts.  The pair distribution $(n_0,n_1,n_2,n_3)$ corresponds to a set $J$ of size $n_1+2n_2+3n_3$.  
Define $\mathcal{C}(\ell,j)$ to be the set of compositions of $\ell$ into four parts with $n_1 + 2n_2 + 3n_3 = j$. 
 
 Then from our previous counting we have that  
  \[
    N_j =  \sum_{(n_0,n_1,n_2,n_3) \in \mathcal{C}(\ell,j)}  3^{n_1+n_2}\binom{\ell}{n_0,n_1,n_2,n_3} \frac{1}{n_0!}\binom{3\ell-3(n_3+n_2)-2n_1}{1^{n_1}, 3^{n_0}}.
  \]

When we put this value in Equation~\ref{eq:altsum} and truncate this sum after an odd $j$ we will get a lower bound on $d_{3,\ell}$.  Taking $j$ up to 5 we sum over the following list of pair distributions:
          \begin{align*}
            \mathcal{C}(\ell,0) &= \{(\ell,0,0,0)\} \\
            \mathcal{C}(\ell,1) &= \{(\ell-1,1,0,0)\} \\
            \mathcal{C}(\ell,2) &= \{(\ell-1,0,1,0),(\ell-2,2,0,0)\} \\
            \mathcal{C}(\ell,3) &= \{(\ell-1,0,0,1),(\ell-2,1,1,0),(\ell-3,3,0,0)\} \\
            \mathcal{C}(\ell,4) &= \{(\ell-2,1,0,1),(\ell-2,0,2,0),(\ell-3,2,1,0),(\ell-4,4,0,0)\} \\
            \mathcal{C}(\ell,5) &= \{(\ell-2,0,1,1),(\ell-3,2,0,1),(\ell-3,1,2,0),(\ell-4,3,1,0),(\ell-5,5,0,0)\}             
            \end{align*}
 Expanding this becomes           
          \begin{align*}
            d_{3,\ell} &\geq \sum_{j=0}^{5} -1^{j} \sum_{(n_0,n_1,n_2,n_3) \in \mathcal{C}(\ell,j)}  3^{n_1+n_2}\binom{\ell}{n_0,n_1,n_2,n_3} \frac{\binom{3\ell-3(n_3+n_2)-2n_1}{1^{n_1}, 3^{n_0}}}{n_0!}\\                      
&=       \frac{\binom{\ell}{\ell}\binom{3\ell}{3^{\ell}}}{(\ell)!} + \frac{-3\binom{\ell}{\ell-1,1}\binom{3\ell-2}{1,3^{\ell-1}}}{(\ell-1)!} + \frac{3\binom{\ell}{\ell-1,1}\binom{3\ell-3}{3^{\ell-1}}}{(\ell-1)!} + \frac{3^2\binom{\ell}{\ell-2,2}\binom{3\ell-4}{1^{2},3^{\ell-2}}}{(\ell-2)!} \\
&\qquad +\frac{-\binom{\ell}{\ell-1,1}\binom{3\ell-3}{3^{\ell-1}}}{(\ell-1)!} + \frac{-3^2\binom{\ell}{\ell-2,1^{2}}\binom{3\ell-5}{1,3^{\ell-2}}}{(\ell-2)!} + \frac{-3^3\binom{\ell}{\ell-3,3}\binom{3\ell-6}{1^{3},3^{\ell-3}}}{(\ell-3)!} \\
&\qquad +\frac{3\binom{\ell}{\ell-2,1^{2}}\binom{3\ell-5}{1,3^{\ell-2}}}{(\ell-2)!} + \frac{3^2\binom{\ell}{\ell-2,2}\binom{3\ell-6}{3^{\ell-2}}}{(\ell-2)!} + \frac{3^3\binom{\ell}{\ell-3,2,1}\binom{3\ell-7}{1^{2},3^{\ell-3}}}{(\ell-3)!} \\
&\qquad +\frac{3^4\binom{\ell}{\ell-4,4}\binom{3\ell-8}{1^{4},3^{\ell-4}}}{(\ell-4)!} + \frac{-3\binom{\ell}{\ell-2,1^{2}}\binom{3\ell-6}{3^{\ell-2}}}{(\ell-2)!} + \frac{-3^2\binom{\ell}{\ell-3,2,1}\binom{3\ell-7}{1^{2},3^{\ell-3}}}{(\ell-3)!} \\
                       &\qquad +\frac{-3^3\binom{\ell}{\ell-3,1,2}\binom{3\ell-8}{1,3^{\ell-3}}}{(\ell-3)!} + \frac{-3^4\binom{\ell}{\ell-4,3,1}\binom{3\ell-9}{1^{3},3^{\ell-4}}}{(\ell-4)!} + \frac{-3^5\binom{\ell}{\ell-5,5}\binom{3\ell-10}{1^{5},3^{\ell-5}}}{(\ell-5)!} \\
            &= \frac{243\ell^6 - 2997\ell^5 + 13905\ell^4 - 32355\ell^3 + 42732\ell^2 - 32728\ell + 11200)(3\ell - 10)!}{80(6^{\ell-4})(\ell-10)!(\ell^6 - 39\ell^5 + 625\ell^4 - 5265\ell^3 + 24574\ell^2 - 60216\ell + 60480)}.
          \end{align*}
         
          Thus
          \[
            \frac{u_{3,\ell}}{d_{3,\ell}} \leq \frac{5(729\ell^6 - 6561\ell^5 + 23085\ell^4 - 40095\ell^3 + 35586\ell^2 - 14904\ell + 2240)}{243\ell^6 - 2997\ell^5 + 13905\ell^4 - 32355\ell^3 + 4273\ell^2 - 32728\ell + 11200}
            \]
            For $\ell > 10$ this gives that  $u_{3,\ell}/d_{3,\ell}  < 24$.
          \end{proof}

    \begin{theorem}
      For $k=3$ and all $\ell\geq 3$ the largest set of partially 2-intersecting uniform partitions has size
      \[
        (3\ell-2) u_{3,\ell-1}.
      \]
    \end{theorem}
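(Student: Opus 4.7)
The plan is to follow the proof of Theorem~\ref{thm:main} with the asymptotic bound of Corollary~\ref{cor:uoverd} replaced by the explicit inequality $u_{3,\ell}/d_{3,\ell} < 24$ coming from Lemma~\ref{k_3_lemma}. Since that lemma holds only for $\ell > 10$, I will dispose of the remaining small cases separately: the odd cases $\ell \in \{3,5,7,9\}$ are already covered by Corollary~7.5.6 of~\cite{MR2708226}, so only the finite list $\ell \in \{4,6,8,10\}$ of even values requires direct treatment.

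For $\ell > 10$, I will specialize inequality~(\ref{eqn_needed}) to $k=3$, which becomes
\[
\frac{v}{d}-1 < \frac{2\ell}{81(\ell-1)^2(\ell-2)^2}\Big(9(\ell-2)^2(3\ell-4)(3\ell+1)+4(3\ell-1)(3\ell-5)\Big).
\]
The right-hand side is a rational function in $\ell$ whose leading term is $2\ell$; direct evaluation at $\ell = 11$ gives a value exceeding $24$, and a short monotonicity check (comparing the numerator and denominator as polynomials in $\ell$) shows it stays above $24$ for every $\ell \geq 11$. Combined with Lemma~\ref{k_3_lemma} this yields $v/d-1 < 23 < \text{RHS}$ for all $\ell > 10$. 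The multiplicity argument of Section~\ref{sec:multiplicity} then applies verbatim: any putative eigenvalue of $X_{3,\ell}$ with absolute value strictly larger than $|\tau|$ would have multiplicity strictly less than $\binom{3\ell}{3}-\binom{3\ell}{2}$, so by Theorem~\ref{thm:RepWeWant} its eigenspace would lie inside one of the $[3\ell]$, $[3\ell-2,2]$, or $[3\ell-3,3]$ modules. But the eigenvalues belonging to those three modules are exactly $d$, $\tau$, and $\theta$ by Lemmas~\ref{lemma:multTau} and~\ref{lemma:multTheta}, none of which is smaller than $\tau$. Hence $\tau$ is the least eigenvalue of $X_{3,\ell}$, and the ratio bound delivers $\alpha(X_{3,\ell}) = (3\ell-2)\,u_{3,\ell-1}$ just as in the proof of Theorem~\ref{thm:main}.

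For the residual even cases $\ell \in \{4,6,8,10\}$, I would compute the spectrum of $X_{3,\ell}$ directly. Since $X_{3,\ell}$ lies in the commutant of the permutation representation $\indg{1_{\sym{3}\wr\sym{\ell}}}{\sym{3\ell}}$, its eigenvalues can be assembled from character sums over the irreducible constituents of that induced representation, a computation that a system such as GAP handles routinely for $3\ell \leq 30$. In each case one checks that $\tau$ remains the least eigenvalue and reads off the equality case of the ratio bound.

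The main obstacle is the small-case verification for even $\ell \leq 10$: the uniform argument is genuinely unavailable there, since at $\ell = 4$ one has $k\ell = 12 < 13$ (so Theorem~\ref{thm:RepWeWant} does not even apply), and Lemma~\ref{k_3_lemma} provides no usable bound below $\ell = 11$. Outside those finitely many cases, however, the proof is essentially a direct substitution $k=3$ into the framework already developed in Sections~\ref{sec:representations}--\ref{sec:multiplicity}, coupled with the explicit $u/d$ bound from Section~\ref{sec:exact3}.
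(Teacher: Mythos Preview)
Your argument for $\ell > 10$ is exactly the paper's: Lemma~\ref{k_3_lemma} gives $u/d - 1 < 23$, the right-hand side of~(\ref{eqn_needed}) at $k=3$ exceeds this for all $\ell \geq 11$, and then the multiplicity bound together with Theorem~\ref{thm:RepWeWant} forces $\tau$ to be least.

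Where you diverge is in the handling of $\ell \leq 10$. You split into odd (covered by the cited corollary from~\cite{MR2708226}) and even (full spectrum computation in GAP). The paper instead pushes the \emph{same} multiplicity argument all the way down to $\ell = 5$: it verifies via GAP that the conclusion of Theorem~\ref{thm:RepWeWant} still holds for $5 \leq \ell \leq 12$ (even though some of these have $k\ell < 13$, the statement itself can be checked from the character table), and then for $5 \leq \ell \leq 10$ it simply computes the single integer $d_{3,\ell}$ and plugs it into~(\ref{eqn_needed}). Only $\ell = 3,4$ get a genuine spectrum computation. This is lighter than what you propose: computing one degree is far easier than extracting the entire spectrum of $X_{3,10}$ from the orbital scheme, and it avoids leaning on the external odd-$\ell$ result.

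One minor caution: your phrase ``assembled from character sums'' overstates what the commutant structure gives you. Since $\indg{1_{\sym{3}\wr\sym{\ell}}}{\sym{3\ell}}$ is not multiplicity-free for $\ell \geq 5$, there is no spherical-function formula; you really need the quotient graph of the $\sym{3}\wr\sym{\ell}$-orbit partition (as the paper does for $\ell=4$). That is still feasible, but it is not a character computation.
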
          
    \begin{proof}
    For $\ell = 3$ all the eigenvalues of $X_{3,3}$ have long been known to be $\{36,8,2,-4,-12\}$~\cite{MR773559} . The ratio bound holds with equality, and the only irreducible representation that belongs to the least eigenvalue is $\chi_{[7,2]}$.
    
    For $\ell=4$, all the eigenvalues of $X_{3,4}$ are $\{ 1296, 96, 72, 48, 32, 0, -24, -48, -288 \}$. These can be calculated by making a quotient graph of $X_{3,4}$ from the action of $\sym{3} \wr \sym{4}$ on the partitions. This equitable partition has a cell of size 1, so the eigenvalues of the quotient graph are exactly the eigenvalues of $X_{3,4}$. Further, the multiplicities of the eigenvalues can be calculated using the formulas in~\cite[Section 5.3]{MR1220704} and the $[10,2]$ module is the only module to which the eigenvalue $-288$ belongs.
        
    For $\ell \in \{5,\dots,12\}$ the only irreducible representations with dimension less then $\binom{3\ell}{3} - \binom{3\ell}{2}$
    in the decomposition of $\indg{ 1_{\sym{3} \wr \sym{\ell}}}{ \sym{3\ell} }$ are the three listed in Theorem~\ref{thm:RepWeWant}---this can be checked using GAP~\cite{GAP4}. Thus Theorem~\ref{thm:RepWeWant} holds for all $5 \leq \ell \leq 12$ when $k=3$.
           
      For all $\ell > 10$, Lemma~\ref{k_3_lemma} shows that $ u_{k,\ell}/d_{k,\ell} -1 < 23$.  In this same range, the right had side of Equation~(\ref{eqn_needed}) is at least 26.  Thus the inequality from Equation~(\ref{eqn_needed}) holds for all 
      $\ell > 10$.  
      
For $5 \leq \ell \leq 10$ the degrees $d_{3,\ell}$ can be directly computed  
\begin{center}
\begin{tabular}{lll}
$d_{3,5}  = 132192$, & $d_{3,7} = 3829057920$, & $d_{3,9} = 333973115062272$, \\
$d_{3,6} = 19258560$,  & $d_{3,8} = 1001695548672$,  & $d_{3,10} = 138348645213579264$,
 \end{tabular}
\end{center}
and the inequality from Equation~(\ref{eqn_needed}) directly checked.
\end{proof}

\section{Further work}

In this paper we only consider partially 2-intersecting partitions, but the conjecture in~\cite{MR2156694} is for  
partial $t$-intersection sets of partitions with $k \leq  \ell(t-1)$. It is possible that the approach in this paper could be 
applied for larger values of $t$, but there are some steps that we predict will be complicated.

It is straight-forward to generalize the definition of $X_{k,\ell}$ to partially $t$-intersecting partitions by defining the
graph $X_{t, k,\ell}$. This graph will also have $\mathcal{U}_{k, \ell}$ as its vertex set, and two partitions $P$ and $Q$ 
are adjacent if and only if for any pair of blocks $P_i \in P$ and $Q_j \in Q$ we have $|P_i \cap Q_j | <t$. 
A partially $t$-intersecting set of partitions is a coclique in $X_{t, k,\ell}$.  

The conjecture is if $k < \ell(t-1)$, then the maximum cocliques in $X_{t, k,\ell}$ are exactly the 
canonical partially $t$-intersecting sets. The Young subgroup $\sym{[k\ell-t,t]}$ is the stabilizer of a canonically partially $t$-intersecting set. The most significant complication is that for $t>2$, there are 
 more than two irreducible representations in both 
\begin{equation}\label{eq:commonreps}
\indg{1_{\sym{[k\ell-t,t]}}  }{ \sym{k\ell} },  \qquad \indg{1_{\sym{k} \wr \sym{\ell}}}{ \sym{k\ell} }. 
\end{equation}
For this approach given in this paper to work, we believe the eigenvalues belonging to all the irreducible 
representations common to these two induced representations, except the trivial representation, should be the least eigenvalue of $X_{t, k, \ell}$. To make this happen we suspect that a weighted adjacency matrix of $X_{t, k, \ell}$ would be needed in the ratio bound, rather than just the adjacency matrix; the weighting would have to be chosen so that the common modules (except the trivial) in Equation~(\ref{eq:commonreps}) all belong to the same eigenvalue. Another complication is that potentially more of the eigenvalues of $X_{t, k, \ell}$ would have to be calculated, at the very least all the eigenvalues belonging to the common representations would need to be known.

 Bender's theorem is much more general than the version we stated here.  We only state Bender's theorem for matrices with 01-entries, but the full theorem applies to matrices with entries less than $t$. Using the full theorem we would be able approximate the degree of $X_{t, k,\ell}$ for $t\geq 2$.

\bibliographystyle{plain}

\end{document}